\renewcommand{\vec}[1]{\mathbf{#1}}
\newcommand{\be}{\begin{equation}}
\newcommand{\ee}{\end{equation}}
\newcommand{\ba}{\begin{array}}
\newcommand{\ea}{\end{array}}
\newcommand{\bea}{\begin{eqnarray}}
\newcommand{\eea}{\end{eqnarray}}
\newcommand{\beas}{\begin{eqnarray*}}
\newcommand{\eeas}{\end{eqnarray*}}
\newtheorem{thm}{Theorem}[section]
\newtheorem{prop}{Proposition}[section]
\numberwithin{equation}{section}
\begin{document}

\begin{frontmatter}

\title{High-order temporal parametric finite element methods for simulating solid-state dewetting}

\author[1]{Xiaowen Gan}
\address[1]{School of Mathematics and Statistics, Wuhan University, Wuhan 430072, China}
\ead{ganxiaowen@whu.edu.cn}

\author[1]{Yuqian Teng}
\ead{t-yuqian@163.com}

\author[1]{Sisheng Wang}
\ead{2015301000004@whu.edu.cn}

%\address[1]{School of Mathematics and Statistics, Wuhan University, Wuhan, 430072, China}
%\address[4]{Shenzhen Research Institute of Wuhan University, Shenzhen, 518057, China}

%\author[2]{Chunmei Su}
%\ead{sucm@tsinghua.edu.cn}
%\author[2]{Ganghui Zhang}
%\ead{gh-zhang19@mails.tsinghua.edu.cn}
%\address[2]{Yau Mathematical Sciences Center, Tsinghua University, Beijing, 100084, China}
%\author[1]{Lian Zhang}
%\ead{lian\_zhang@whu.edu.cn}

%%%%% Begin Abstract %%%%%%%%%%%

\begin{abstract}
We propose a class of temporally high-order parametric finite element methods for simulating solid-state dewetting of thin films in two dimensions using a sharp-interface model. The process is governed by surface diffusion and contact point migration, along with appropriate boundary conditions. By incorporating the predictor-corrector strategy and the backward differentiation formula for time discretization into the energy-stable parametric finite element method developed by Zhao et al. (2021), we successfully construct temporally high-order schemes. The resulting numerical scheme is semi-implicit, requiring the solution of a linear system at each time step. The well-posedness of the fully discretized system is established. Moreover, the method maintains the long-term mesh equidistribution property. 
Extensive numerical experiments demonstrate that our methods achieve the desired temporal accuracy, measured by the manifold distance, while maintaining good mesh quality throughout the evolution.
\end{abstract}
%%%%% end %%%%%%%%%%%

%%%%% Keywords %%%%%%%%%%%
%\pac{}
%\ams{35Q55, 65M70, 65N25, 65N35, 81Q05}

%\pac{68.35.-p, 68.55.Jk, 68.37.-d, 81.16.Rf}

\begin{keyword}
solid-state dewetting, surface diffusion, contact line migration, parametric finite element method, high-order in time.

\end{keyword}

\end{frontmatter}
%=============================================Introduction===============================================
%=====================================================================================================
\section{Introduction}

Due to surface tension and capillarity effects (especially at high temperatures and/or over long time scales), solid-state dewetting is a ubiquitous phenomenon in thin film technology \cite{Jiran1990,Ye2010,Leroy2012,Thompson2012}. A solid thin film deposited on a substrate can break up and agglomerate into isolated islands when heated to a critical temperature, which is well below the material’s melting point. This process is referred to as solid-state dewetting because it occurs while the film remains in the solid state. Solid-state dewetting has been observed and studied in a wide range of material systems \cite{Rabkin2014,Armelao2006,Rath2007,Randolph2007,Schmidt2009}. For a more comprehensive introduction to this field, interested readers are referred to the review by \cite{Thompson2012,Leroy2016}.

\begin{figure}[h!]
\hspace{-7mm}
\centering
\includegraphics[width=4.9in,height=2.5in]{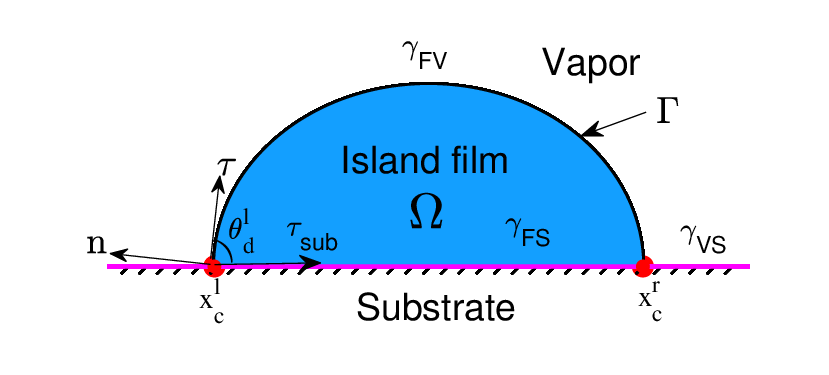}
\vspace{-10mm}
\caption{ A schematic illustration of an island film on a flat, rigid substrate in two dimensions with three interaces, i.e., island film/vapor (FV), island film/substrate (FS), and vapor/substrate (VS) interface, where
$ x_c^l$ and $x_c^r$ denote the $x$-cordinates of the  left and right contact points, $\gamma_{FV}$ , $\gamma_{VS}$ and $\gamma_{FS}$ represent the film/vapor, vapor/substrate and film/substrate surface energy densities, respectively, 
$\bm{\tau}=(\partial_sx,\partial_sy)^{T}$ and $\bm{n}=(-\partial_sy,\partial_sx)^{T}$ are the unite tangential and  normal vectors, and $\bm{\tau}_{sub}=(1,0)^{T}$ represents the unit vector along the x-coordinate.}
\label{fig:1}
\end{figure}

Numerous numerical methods have been proposed for simulating solid-state dewetting. These include parametric methods \cite{Wang2015,Jiang2016,Bao2017-dewetting,Jiang2019,Li2026,Zhao2020,Zhao2021}, phase-field methods \cite{Jiang2012,Dziwnik2017,Huang2019}, level set methods \cite{Chopp1999,Smereka2003,Kolahdouz2013}, marker-particle methods \cite{Wong2000,Du2010}, crystalline formulation methods \cite{Carter1995,Zucker2013}, discrete surface chemical potential methods \cite{Dornel2006}, and kinetic Monte Carlo methods \cite{Pierre2009}.
Among these, the parametric approach based on a sharp-interface model can accurately capture the physics of surface diffusion and contact line migration in solid-state dewetting. It is also computationally efficient and straightforward to implement. The parametric finite element method (PFEM) has emerged as a prevalent technique for solving geometric flows \cite{Deckelnick-Dziuk2005,Barrett2020}. The method was originally introduced by Dziuk  \cite{Dziuk1991} for surfaces in $\mathbb{R}^3$ and subsequently extended to curves in $\mathbb{R}^2$ in  \cite{Dziuk1994}. In 2007, Barrett et al. introduced the BGN scheme, a novel parametric finite element method for surface diffusion flow of closed curves \cite{BGN2007}, which employs piecewise linear finite element spatial discretization and a backward Euler temporal discretization. This scheme avoids remeshing and maintains high mesh quality during evolution by incorporating implicit tangential motion. Consequently, the BGN scheme has been widely applied to problems such as the Mullins–Sekerka problem \cite{BGN2010-1}, the Stefan problem \cite{BGN2010-2,BGN2013-1}, two-phase flow \cite{BGN2013-2,BGN2015}, and biofilm growth dynamics \cite{BGN2016}, as well as to modeling solid-state dewetting processes \cite{Bao2017-dewetting,Li2025,Zhao2020,Zhao2021}.

In general, the solid-state dewetting problem can be regarded as a type of geometric evolution problem for open curves or surfaces, governed by surface diffusion and contact point (in 2D) or contact line (in 3D) migration. This is often referred to as a moving contact line problem \cite{Wang2015,Bao2017-dewetting, Zhao2020}.
As illustrated in Figure \ref{fig:1}, consider an open curve $\Gamma(t) = \vec{X}(s,t) = (x(s,t), y(s,t))^T$ in two dimensions. Its two contact points, defined as $(x_c^l(t),0) = (x(0,t),0)$ and $(x_c^r(t),0) = (x(L(t),t),0)$, migrate along the substrate (i.e., x-axis). Here, the arc length parameter $s$ ranges from $0$ to $L(t)$, where $L(t) = |\Gamma(t)|$ denotes the total length of the curve at time $t$.
The total interfacial energy of the system is defined as $W(t) = \int_{\Gamma(t)} 1 ds - \sigma [x_c^r(t) - x_c^l(t)]$. The dimensionless material parameter $\sigma$ is given by $\sigma = (\gamma_{VS} - \gamma_{FS}) / \gamma_{FV}$, where $\gamma_{FV}$, $\gamma_{VS}$, and $\gamma_{FS}$ represent the film-vapor, vapor-substrate, and film-substrate interfacial energy densities, respectively.
Through thermodynamic variations of the total interfacial energy, the following sharp-interface model has been derived to simulate solid-state dewetting of thin films with isotropic surface energy in two dimensions \cite{Wang2015,Bao2017,Bao2017-dewetting,Zhao2021}:

%\begin{subequations}
%	\label{eqn:2diso}
%	\begin{numcases}{}
%	\label{eqn:2diso1}
%	\partial_{t}\vec{X}=\partial_{ss}\kappa \; \vec{n}, \quad 0<s<L(t),\quad t>0, \\
%	\label{eqn:2diso2}
%	\kappa=-\left(\partial_{ss}\vec{X}\right)\cdot\vec{n}, 
%	\end{numcases}
%\end{subequations}
\begin{subequations}\label{eqn:2diso}
\begin{align}
\partial_{t}\vec{X}&=\partial_{ss}\kappa \; \vec{n}, \quad 0<s<L(t),\quad t>0, \label{eqn:2diso1}\\
\kappa&=-\left(\partial_{ss}\vec{X}\right)\cdot\vec{n},  \label{eqn:2diso2}
\end{align}
\end{subequations}
where $\vec n=(-\partial_s y, \partial_s x)^T$ is the unit normal vector and $\kappa$ is the curvature of the curve.
The boundary conditions are given as
\begin{itemize}
	\item [(i)] contact point condition
	\begin{equation}
	\label{eqn:weakBC1}
	y(0,t)=0,\quad y(L(t),t) = 0,\quad t\geq0;
	\end{equation}
	\item [(ii)] relaxed contact angle condition
	\begin{equation}
	\label{eqn:weakBC2}
	\frac{\mathrm{d} x_c^l(t)}{\mathrm{d t}}=\eta (\cos\theta_d^l - \sigma),\qquad
	\frac{\mathrm{d} x_c^r(t)}{\mathrm{d t}}=-\eta (\cos\theta_d^r -\sigma),\qquad t\ge0;
	\end{equation}
	\item [(iii)]zero-mass flux condition
	\begin{equation}
	\label{eqn:weakBC3}
	\partial_s\kappa(0,t) =0,\qquad\partial_s\kappa(L(t),t) = 0, \qquad t\geq 0;
	\end{equation}
\end{itemize}
where $\theta_d^l=\theta_d^l(t)$ and $\theta_d^r=\theta_d^r(t)$ represent the dynamic contact angles at the left and right moving contact points, respectively, and $\eta>0$ denotes the contact line mobility. 
This parameter governs the relaxation rate at which the dynamic contact angles $\theta_d^l$ and $\theta_d^r$ approach the equilibrium contact angle $\theta_i$. The equilibrium contact angle $\theta_i \in (0, \pi)$, also known as the isotropic Young's angle, satisfies the Young equation: $\cos\theta_i = \sigma = (\gamma_{\text{VS}} - \gamma_{\text{FS}}) / \gamma_{\text{FV}}$.
The initial curve is given by:
\begin{equation}
\vec{X}(0)=\vec{X}(s,0)=(x(s,0), y(s,0))^T, \qquad 0 \le s \le L(0),
\label{eqn:initial}
\end{equation}
with $x_c^l(0) < x_c^r(0)$. Here, $\Omega$ denotes the island film (see Figure \ref{fig:1}), and $|\Omega| = \int_{0}^{L(t)} y  \partial_s x ds$ represents the area (or mass) of the film. This area is defined for the region enclosed by the open curve $\Gamma(t)$ and the substrate. By combining \eqref{eqn:2diso} with the boundary conditions, one can prove that \cite{Bao2017}:
 $$\frac{\mathrm{d}}{\mathrm{d t}}|\Omega|=0,\quad \frac{\mathrm{d}}{\mathrm{d } t}W=-\int_{0}^{L(t)}(\partial_s\kappa)^2ds-\frac{1}{\eta}\left[\Big(\frac{\mathrm{d} x_c^l(t)}{\mathrm{d } t}\Big)^2+\Big(\frac{\mathrm{d} x_c^r(t)}{\mathrm{d } t}\Big)^2\right]\leq0,\quad\forall~t\geq0.$$
This implies conservation of mass/area and dissipation of energy over time.

In 2021, Zhao, Jiang and Bao proposed an energy-stable parametric finite element method \cite{Zhao2021} by reformulating the relaxed contact angle condition \eqref{eqn:weakBC2} into a Robin-type boundary condition as:
\begin{equation}\label{eq:Robin}
\partial_{s}x|_{s=0}=\sigma+\frac{1}{\eta}\frac{\mathrm{d} x_c^l(t)}{\mathrm{d } t},\quad \partial_{s}x|_{s=L(t)}=\sigma-\frac{1}{\eta}\frac{\mathrm{d} x_c^r(t)}{\mathrm{d } t}.
\end{equation}
They obtained a new variational formulation by incorporating the boundary conditions \eqref{eqn:weakBC3} and \eqref{eq:Robin} through integration by parts. This approach simultaneously handles the motion of the interface curve and its contact points, and energy dissipation is proven for the variational formulation, spatial semi-discretization, and full discretization. Furthermore, the numerical results demonstrate second-order accuracy in space and well-maintained mesh quality. We refer to this method as the ZJB scheme.

The time discretization of the ZJB scheme utilizes the backward Euler method, resulting in first-order accuracy in time. To improve the accuracy and efficiency of the scheme, it is desirable to develop high-order temporal schemes for simulating solid-state dewetting problems. Recently, Jiang, Su, and Zhang developed higher-order temporal schemes for surface diffusion flow based on the BGN method \cite{Jiang2024,Jiang-BDF2024,Jiang2025}. The high-order temporal accuracy of these schemes has been verified numerically using the manifold distance.

The main aim of this paper is to develop and investigate high-order temporal schemes for simulating solid-state dewetting problems in 2D. Inspired by \cite{Zhao2021,Jiang-BDF2024,Jiang2025}, we incorporate the predictor-corrector method and backward differentiation formula (BDF) method for time discretization into the ZJB scheme. The proposed high-order temporal scheme inherits the advantageous properties of the ZJB scheme, such as well-posedness and long-term mesh equidistribution.

The remainder of this paper is organized as follows. In Section 2, we provide a brief overview of the ZJB scheme. In Section 3, we propose our high-order temporal schemes. Extensive numerical experiments are performed in Section 4 to demonstrate the high-order temporal accuracy and efficiency of the proposed schemes. Finally, we draw conclusions in Section 5.

\section{Review of the ZJB scheme.}
%\begin{figure}[!htp]
%	\centering
%	\includegraphics[width=0.8\textwidth]{Fig2.eps}
%	%\label{fig:model2dd}
%	\caption{A schematic illustration of the interface profile near the left contact point, where $\boldsymbol{\tau}:=\boldsymbol{\tau}(s,~t)=(\partial_s x,\partial_s y)^T$ is the unit tangential vector, and $\vec t_{\rm %sub}=(1,~0)^T$ represents the unit vector along the $x$-coordinate (i.e., the substrate line).}
%	\label{fig:contact}
%\end{figure}
In this section, we review the ZJB scheme used to simulate solid-state dewetting problems in 2D \cite{Zhao2021}. We introduce a time-independent spatial variable $\rho$ such that $\Gamma(t)$ can be parameterized on the fixed domain $ \mathbf{I}=[0,1]$ as
$$
\Gamma(t):=\vec X(\rho,t)=(x(\rho,t),~y(\rho,t))^T:\quad\mathbf{I}\times [0, T]~\rightarrow~\mathbb{R}^2.
$$
The arc length $s$ can be expressed as $s(\rho,t)=\int_0^\rho |\partial_q\vec{X}|dq$, and we have $\partial_\rho s=|\partial_\rho\vec{X}|$.

We also introduce the functional spaces associated with the evolving curve $\Gamma(t)$ as
\begin{align*}
H^1(\mathbf{I})&=\{ f : \mathbf{I}\rightarrow \mathbb{R} \mid f\in L^2(\mathbf{I}),~ \partial_\rho f\in L^2(\mathbf{I})\},\\
H^1_0(\mathbf{I})&=\{f : \mathbf{I}\rightarrow \mathbb{R} \mid f\in H^1(\mathbf{I}),~f(0)=f(1) =0\}.
\end{align*}
where the space $L^2(\mathbf{I})$ is defined as:
$$
L^2(\mathbf{I})=\{f : \mathbf{I}\rightarrow \mathbb{R}\mid \langle f,f\rangle_{\Gamma(t)}
=\int_{\mathbf{I}}  f^2(\rho)|\partial_\rho\vec{X}| d\rho <+\infty \}.
$$
The boundary condition \eqref{eqn:weakBC1} implies that the y-component of the solution satisfies $y(\cdot,t)\in H^1_0(\mathbf{I})$.

The new variational formulation for the sharp-interface model (\ref{eqn:2diso}) with boundary conditions (\ref{eqn:weakBC1})--(\ref{eqn:weakBC3}) and initial condition \eqref{eqn:initial} is stated as follows: Given an initial open curve $\Gamma(0)=\vec X(0)\in H^1(\mathbf{I})\times H_0^1(\mathbf{I})$ with $x_c^l(0) < x_c^r(0)$, find for time $t>0$ the evolving curve $\Gamma(t)=\vec X(\cdot,t)=(x(\cdot,t), y(\cdot,t))^T \in H^1(\mathbf{I})\times H_0^1(\mathbf{I})$ and the curvature $\kappa(\cdot, t)\in H^1(\mathbf{I})$ such that
\begin{subequations}
	\label{eqn:2dweak}
	\begin{align}
	\label{eqn:2dweak1}
	&\left\langle \partial_t\vec X,~\vec n\psi\right\rangle_{\Gamma(t)} + \left\langle \partial_s\kappa,~\partial_s\psi\right\rangle_{\Gamma(t)}=0,\qquad\forall ~\psi\in H^1(\mathbf{I}),\\[0.5em]
	\label{eqn:2dweak2}
	&\left\langle\kappa\vec n,\boldsymbol{\omega}\right\rangle_{\Gamma(t)}-\left\langle\partial_s\vec X,~\partial_s\boldsymbol{\omega}\right\rangle_{\Gamma(t)} - \frac{1}{\eta}\Bigr[\frac{dx_c^l(t)}{dt}\,\omega_1(0) + \frac{dx_c^r(t)}{dt}\omega_1(1)\Bigr]\nonumber\\
	&\qquad\qquad\qquad\qquad + \;\sigma\,\Bigl[\omega_1(1) - \omega_1(0)\Bigr] = 0,\quad\forall~\boldsymbol{\omega}=(\omega_1,~\omega_2)^T\in H^1(\mathbf{I})\times H_0^1(\mathbf{I}),
	\end{align}
\end{subequations}
where $x_c^l(t)$ and $x_c^r(t)$ represent the $x$-coordinates of the left and right contact points, respectively, and are assumed to satisfy $x_c^l(t) \le x_c^r(t)$ for all $t > 0$. Equation \eqref{eqn:2dweak1} is derived using integration by parts and the zero-mass flux condition \eqref{eqn:weakBC3}, while \eqref{eqn:2dweak2} is obtained through integration by parts and the Robin-type boundary condition (the relaxed contact angle condition) \eqref{eq:Robin}. The mass/area conservation and energy dissipation properties of the proposed variational formulation \eqref{eqn:2dweak}  can also be rigorously established \cite{Zhao2021}.

For the spatial semi-discretization, let $N \geq 3$ be a positive integer. Denote the mesh size by $h = 1/N$ and the grid points by $\rho_j = jh$ for $j = 0, 1, \ldots, N$. The interval $\mathbf{I} = [0, 1]$ is then partitioned as $\mathbf{I} = \bigcup_{j=1}^{N} I_j$, where each subinterval is given by $I_j = [\rho_{j-1}, \rho_j]$. We also introduce the piecewise linear finite element spaces as
\begin{equation*}
\begin{split}
%\label{eqn:FEMspace1}
&\mathbb{K}^h:=\{\psi^{h}\in C(\mathbf{I})\mid\;\psi^{h}\mid_{I_{j}}\in \mathcal{P}_1,\quad \forall \, j=1,2,\ldots,N\}\subseteq H^1(\mathbf{I}),\\
&\mathbb{K}_0^h := \left\{ \psi^h \in \mathbb{K}^h\mid\; \psi^h(0) =0,~ \psi^h(1)=0 \right\}\subseteq  H^1_{0}(\bm{I}),
\end{split}
\end{equation*}
where $\mathcal{P}_1$ stands for the space of polynomials of degree at most 1. For the basis functions of the space $\mathbb{K}^h$, we introduce hat functions associated with the nodes $\rho_{0},\ldots,\rho_{N}$. For $j=1,\ldots,N-1$, let
\[\phi_{j}^{h}(\rho)=\left\{\begin{array}{lll}
(\rho-\rho_{j-1})/h&\rho_{j-1}\leq\rho\leq\rho_{j},\\
(\rho_{j+1}-\rho)/h&\rho_{j}\leq\rho\leq\rho_{j+1},\\
0&\text{otherwise},
\end{array}\right.\]
for $j=0$,
\[\phi_{0}^{h}(\rho)=\left\{\begin{array}{lll}
(\rho_{1}-\rho)/h&\rho_0\leq\rho\leq\rho_{1},\\
0&\text{otherwise},
\end{array}\right.\]
and for $j=N$,
\[\phi_{N}^{h}(\rho)=\left\{\begin{array}{lll}
(\rho-\rho_{N-1})/h&\rho_{N-1}\leq\rho\leq\rho_{N},\\
0&\text{otherwise},
\end{array}\right.\]
The basis functions of the space $ \mathbb{K}^h \times \mathbb{K}_0^h$ are given by $(\phi_{j}^{h},0)^{T}$ and $(0,\phi_{i}^{h})^{T}$ for
$j=0,\ldots,N$ and $i=1,\ldots,N-1$.

Let $\Gamma^h(t) = \vec{X}^h(\cdot,t) = (x^h(\cdot,t), y^h(\cdot,t))^{T} \in \mathbb{K}^h \times \mathbb{K}_0^h$ be the numerical approximation to the solution $\Gamma(t) = \vec{X}(\cdot,t)$ of the variational problem \eqref{eqn:2dweak}. The mass-lumped inner product $\langle \cdot, \cdot \rangle_{\Gamma^h(t)}^h$ on the polygonal curve $\Gamma^h(t) = \vec{X}^{h}(t)$ is defined by
$$ \left\langle u,~v\right\rangle_{\Gamma^h(t)}^h=\frac{1}{2}\sum_{j=1}^{N}\Big|\vec X^h(\rho_j,t)-\vec X^h(\rho_{j-1},t)\Big|\Big[\big(u\cdot v\big)(\rho_j^-)+\big(u\cdot v\big)(\rho_{j-1}^+)\Big],     $$
where $u(\rho_{j}^{\pm})=\lim\limits_{\rho\rightarrow \rho_j^{\pm}}u(\rho)$.

Subsequently, the semi-discrete scheme for the variational formulation \eqref{eqn:2dweak} using continuous piecewise linear elements can be stated as follows. Given an  initial curve $\Gamma^h(0) = \vec{X}^h(0) \in \mathbb{K}^h \times \mathbb{K}_0^h$ with $(x^h)_c^l(0) < (x^{h})_c^r(0)$, find for time $t > 0$ the evolving curve $\Gamma^h(t) = \vec{X}^h(\cdot, t)=(x^h(\cdot,t), y^h(\cdot,t))^T  \in \mathbb{K}^h \times \mathbb{K}_0^h$ and the curvature $\kappa^h(\cdot, t) \in \mathbb{K}^h$ such that
\begin{subequations}
	\label{eqn:2dsemi}
	\begin{align}
	\label{eqn:2dsemi1}
	&\left\langle\partial_t\bm{ X}^h,~\bm{ n}^h\psi^h\right\rangle_{\Gamma^h(t)}^h + \left\langle\partial_s\kappa^h,~\partial_s\psi^h\right\rangle_{\Gamma^h(t)}=0,\qquad\forall ~\psi^h\in \mathbb{K}^{h},\\[0.5em]
	\label{eqn:2dsemi2}
	&\left\langle\kappa^h\bm{ n}^h, \boldsymbol{\omega}^h\right\rangle_{\Gamma^h(t)}^h-\left\langle\partial_s\bm{ X}^h,~\partial_s\boldsymbol{\omega}^h\right\rangle_{\Gamma^h(t)} - \frac{1}{\eta}\Bigr[\frac{\mathrm{d}(x^h)_c^l(t)}{\mathrm{d}t}\,\omega_1^h(0) + \frac{\mathrm{d}(x^h)_c^r(t)}{\mathrm{d}t}\,\omega_1^h(1)\Bigr] \nonumber \\
	&\qquad\qquad\qquad+ \sigma\,\Bigl[\omega_1^h(1) - \omega_1^h(0)\Bigr] = 0,\quad\forall~\boldsymbol{\omega}^h=(\omega_1^h,~\omega_2^h)^T
	\in\mathbb{K}^h\times\mathbb{K}_0^h,
	\end{align}
\end{subequations}
where $(x^h)_c^l(t)$ and $(x^h)_c^r(t)$ represent the $x$-coordinates of the left and right contact points, respectively, and are assumed to satisfy $(x^h)_c^l(t) \le (x^h)_c^r(t)$ for all $t > 0$. Here, the normal vector $\bm{n}^{h}$ and the partial derivative $\partial_{s}$ are defined piecewise on each interval $I_j$ as follows:
$$\bm{n}^{h}|_{I_{j}}=-\frac{\bm{h}_{j}^{\bot}}{|\bm{h}_{j}|},\quad
\partial_{s}f|_{I_{j}}=\frac{\partial_{\rho}f}{|\partial_{\rho}\bm{X}^{h}|}\bigg|_{I_{j}}=\frac{h\partial_{\rho}f|_{I_{j}}}{|\bm{h}_{j}|},$$
where  $h_j(t)=\vec X^h(\rho_j,t)-\vec X^h(\rho_{j-1},t)$ for $j=1,\ldots,N$. Furthermore, it can be rigorously proven that the semi-discrete scheme \eqref{eqn:2dsemi} preserves mass/area and guarantees energy dissipation \cite{Zhao2021}.

For the time discretization, we choose a uniform time step size $\tau > 0$ and define the discrete time levels by $t_m = m\tau$ for integers $m \geq 0$. Let $\Gamma^m = \vec{X}^m = (x^m, y^m)^{T} \in \mathbb{K}^h \times \mathbb{K}_0^h$ denote the numerical approximation to the solution $\Gamma^{h}(t_m) = \vec{X}^h(t_m)$ of the semi-discrete scheme \eqref{eqn:2dsemi} at time $t_m$. The discrete inner product $\langle \cdot, \cdot \rangle_{\Gamma^m}^h$ and the discrete unit normal vector $\vec{n}^m$ are defined analogously to their semi-discrete counterparts.

The full discretization of the variational formulation \eqref{eqn:2dweak} is obtained by applying the backward Euler method to the semi-discrete formulation \eqref{eqn:2dsemi} in time. It can be stated as follows: Given an initial curve $\Gamma^0 = \vec{X}^{0} \in \mathbb{K}^h \times \mathbb{K}_0^h$ with $(x^0)_c^l < (x^0)_c^r$, find for each $m \geq 0$ the evolving curve $\Gamma^{m+1} = \vec{X}^{m+1}=(x^{m+1}, y^{m+1})^T \in \mathbb{K}^h \times \mathbb{K}_0^h$ and the curvature $\kappa^{m+1} \in \mathbb{K}^h$ such that
\begin{subequations}
	\label{eqn:dis2d}
	\begin{align}
	\label{eqn:dis2d1}
	&\left\langle   \frac{\vec X^{m+1}-\vec X^m}{\tau},~\vec n^m\psi^h\right\rangle_{\Gamma^m}^h + \left\langle\partial_s\kappa^{m+1},~\partial_s\psi^h\right\rangle_{\Gamma^m}=0,\quad\forall ~\psi^h\in \mathbb{K}^h,\\[0.5em]
	\label{eqn:2dsemi2}
	&\left\langle\kappa^{m+1}\bm{ n}^m, \boldsymbol{\omega}^h\right\rangle_{\Gamma^m}^h-\left\langle\partial_s\bm{ X}^{m+1},~\partial_s\boldsymbol{\omega}^h\right\rangle_{\Gamma^m} - \frac{1}{\eta\tau}\left[\Big((x^{m+1})_c^l-(x^m)_c^l\Big)\,\omega_1^h(0) + \Big((x^{m+1})_c^r-(x^m)_c^r\Big)\,\omega_1^h(1)\right] \nonumber \\
	&\qquad\qquad\qquad+ \sigma\,\Bigl[\omega_1^h(1) - \omega_1^h(0)\Bigr] = 0,\quad\forall~\boldsymbol{\omega}^h=(\omega_1^h,~\omega_2^h)^T
	\in\mathbb{K}^h\times\mathbb{K}_0^h,
	\end{align}
\end{subequations}
where $(x^m)_c^l$ and $(x^m)_c^r$ represent the $x$-coordinates of the left and right contact points, respectively, and are assumed to satisfy $(x^m)_c^l \le (x^m)_c^r$ for all $m \geq 1$. The fully discrete scheme \eqref{eqn:dis2d} is well-posed under mild conditions and is unconditionally energy stable \cite{Zhao2021}.

Moreover, for all $m \geq 0$, the area of the region enclosed by the open curve $\Gamma^m$ and the substrate is defined as:
$$A^{m}=\frac{1}{2}\sum_{j=1}^{N}(x_{j}^{m}-x_{j-1}^{m})(y_{j}^{m}+y_{j-1}^{m}).$$
Similarly, the energy of the system under the full-discretization is given by:
$$ W^{m}=\sum_{j=1}^{N}\mid\bm{h}_{j}^{m}\mid-\sigma\left((x^m)_c^r-(x^m)_c^l\right).$$

%============================================ ==========================================
%=========================================================================================================

\section{High-order in time schemes}
In this section, we propose two classes of high-order temporal schemes based on a predictor-corrector strategy and backward differentiation formulae (BDF), respectively.
\subsection{Predictor-corrector time discretization}
Following the methodology in \cite{Jiang2025}, the predictor-corrector strategy is employed to discretize the semi-discrete scheme \eqref{eqn:2dsemi} at the intermediate time point $t_{m+\frac{1}{2}}$. Specifically, in \eqref{eqn:2dsemi}, we introduce the following approximation for the time derivative term:
\begin{subequations}
	\begin{align}
\partial_{t} \mathbf{X}^{h}\left(t_{m+\frac{1}{2}}\right) &\sim \frac{\mathbf{X}^{h}(t_{m+1})-\mathbf{X}^{h}(t_m)}{\tau}+\mathcal{O}\left(\tau^{2}\right),\label{eq:pct1}\\
\frac{\mathrm{d}(x^h)_c^l}{\mathrm{d} t}\left(t_{m+\frac{1}{2}}\right)&\sim \frac{(x^{h})_c^l(t_{m+1})-(x^{h})_c^l(t_{m})}{\tau}+\mathcal{O}\left(\tau^{2}\right),\label{eq:pct2}\\
\frac{\mathrm{d}(x^h)_c^r}{\mathrm{d} t}\left(t_{m+\frac{1}{2}}\right)&\sim \frac{(x^{h})_c^r(t_{m+1})-(x^{h})_c^r(t_{m})}{\tau}+\mathcal{O}\left(\tau^{2}\right).\label{eq:pct3}
\end{align}
\end{subequations}

Inspired by the above approximations and by the approximation of $\partial_s\kappa^h$ and $\partial_s\bm{X}^h$ via the midpoint rule, we derive the following scheme (denoted as the \textbf{PC-ZJB scheme}):
Given an initial curve $\Gamma^0 = \vec{X}^{0} \in \mathbb{K}^h \times \mathbb{K}_0^h$ with $(x^0)_c^l < (x^0)_c^r$, for each $m \geq 0$, find the evolving curve $\Gamma^{m+1} = \vec{X}^{m+1} =(x^{m+1}, y^{m+1})^T\in \mathbb{K}^h \times \mathbb{K}_0^h$ and the curvature $\kappa^{m+1} \in \mathbb{K}^h$ such that
\begin{subequations}
	\label{PC}
	\begin{align}
	\label{PCa}
	& \left\langle\frac{\bm{X}^{m+1}-\bm{X}^{m}}{\tau},\widetilde{\bm{n}}^{m+1/2}\psi^{h}\right\rangle_{\widetilde{\Gamma}^{m+1/2}}^{h}+\left\langle\partial_{s}\left(\frac{\kappa^{m+1}+\kappa^{m}}{2}\right),\partial_{s}\psi^{h}\right\rangle_{\widetilde{\Gamma}^{m+1/2}}=0\quad\forall~\psi^h\in \mathbb{K}^h,\\[0.5em]
	\label{PCb}
& \left\langle\frac{\kappa^{m+1}+\kappa^{m}}{2}\widetilde{\bm{n}}^{m+1/2},\boldsymbol{\omega}^{h}\right\rangle_{\widetilde{\Gamma}^{m+1/2}}^{h}-\left\langle\partial_{s}\left(\frac{\bm{X}^{m+1}+\bm{X}^{m}}{2}\right),\partial_{s}\boldsymbol{\omega}^{h}\right\rangle_{\widetilde{\Gamma}^{m+1/2}}
 + \sigma\,\Bigl[\omega_1^h(1) - \omega_1^h(0)\Bigr] \nonumber \\
	&-\frac{1}{\eta\tau}\Bigr[\Big((x^{m+1})_c^l-(x^{m})_c^l\Big)\,\omega_1^h(0) +\Big ((x^{m+1})_c^r-(x^{m})_c^r\Big)\,\omega_1^h(1)\Bigr] =0\quad\forall~\boldsymbol{\omega}^h=(\omega_1^h,~\omega_2^h)^T
	\in\mathbb{K}^h\times\mathbb{K}_0^h,
	\end{align}
\end{subequations}
where $\widetilde{\Gamma}^{m+1/2} = \widetilde{\bm{X}}^{m+1/2}(\bm{I})$, with $\widetilde{\bm{X}}^{m+1/2}$ being the solution of the ZJB scheme \eqref{eqn:dis2d} using a half time step $\tau/2$. Here, both the normal vector $\widetilde{\bm{n}}^{m+1/2}$ and the partial derivative $\partial_{s}$ are defined piecewise over the curve $\widetilde{\Gamma}^{m+1/2}$ as follows:
 $$\widetilde{n}^{m+1/2}\mid_{I_{j}}=-\frac{(\partial_{\rho}\widetilde{\bm{X}}^{m+1/2})^{\bot}}{|\partial_{\rho}\widetilde{\bm{X}}^{m+1/2}|}\bigg|_{I_{j}}=-\frac{(\widetilde{\bm{h}}^{m+1/2}_{j})^{\bot}}{|\widetilde{\bm{h}}^{m+1/2}_{j}|}=\widetilde{\bm{n}}_j^{m+1/2},\quad
\partial_{s}f\mid_{I_{j}}=\frac{\partial_{\rho}f}{|\partial_{\rho}\widetilde{\bm{X}}^{m+1/2}|}\bigg|_{I_{j}}=\frac{h\partial_{\rho}f|_{I_{j}}}{|\widetilde{\bm{h}}^{m+1/2}_{j}|},$$
for $j=1,\ldots,N$.
%\begin{equation}\label{bao-normal}
%\widetilde{\bm{n}}^{m+1/2}=,
%\end{equation}
%and

\begin{algorithm}(\textbf{PC-ZJB algorithm for solid-state dewetting})\label{algorithm-pc}\\
\textbf{Step~0}. Given an initial curve $\Gamma^{0} = \vec{X}^{0}\in \mathbb{K}^h \times \mathbb{K}_0^h$, a number of grid points $N$, and a time step size $\tau$, we construct the polygon $\Gamma^{0}$ using an equal partition of the polar angle.\\
\textbf{Step~1}. Using $\bm{X}^{0}$ as the initial input, we compute $\kappa^{0}$ following the procedure outlined in Remark 2.1 of \cite{Jiang2024}. The computation entails applying the least squares method to the following weak formulation:
$$\left\langle\kappa^{0}\bm{ n}^{0}, \boldsymbol{\omega}^h\right\rangle_{\Gamma^0}^h-\left\langle\partial_s\bm{ X}^0,~\partial_s\boldsymbol{\omega}^h\right\rangle_{\Gamma^0}=0\quad\forall~\boldsymbol{\omega}^h
	\in\mathbb{K}^h\times\mathbb{K}_0^h.$$ 
  Set $m=0$.\\
\textbf{Step~2}.
\begin{itemize}
\item \textbf{Predictor}: Compute $\widetilde{\bm{X}}^{m+1/2}$ by using the ZJB scheme \eqref{eqn:dis2d} for one time step with time step size $\tau/2$.
\item \textbf{Corrector}: Compute $\bm{X}^{m+1},\kappa^{m+1}$ by using the PC-ZJB scheme \eqref{PC} for one time step with  time step size $\tau$.
\end{itemize}
\textbf{Step~3}. Update $m=m+1$. If $m<T/\tau$, then go back to \textbf{Step~2}; otherwise, stop the algorithm and output the data.
\end{algorithm}

Similar to the proof of Theorem 4.1 in \cite{Zhao2021}, the well-posedness of the PC-ZJB scheme \eqref{PC} can be established under mild assumptions on the predicted curve $\widetilde{\Gamma}^{m+1/2}$. Furthermore, we define the space $\mathbb{X}^{h} = \mathbb{K}^h \times \mathbb{K}_0^h$. The normal vector $\widetilde{\bm{n}}_j^{m+1/2} = (\widetilde{n}^{m+1/2}_{j,1}, \widetilde{n}^{m+1/2}_{j,2})^{T}$ of $\widetilde{\bm{X}}^{m+1/2}$ on the interval $I_j$ is defined for each $j = 1, \ldots, N$.

\begin{thm}(Well-posedness)\label{thm:PC}
Assume that the following conditions are satisfied:
\begin{equation}\label{equ:well-posedness}
\begin{aligned}
&(i)\quad(\widetilde{n}^{m+1/2}_{1,1})^{2}+(\widetilde{n}^{m+1/2}_{N,1})^{2}>0;\\[0.5em]
&(ii)\quad \min_{1\leq j \leq N}|\widetilde{\bm{h}}^{m+1/2}_{j}|=\min_{1\leq j \leq N}|\widetilde{\bm{X}}^{m+1/2}(\rho_{j})-\widetilde{\bm{X}}^{m+1/2}(\rho_{j-1})|>0.
\end{aligned}
\end{equation}
Then, the $N$-dimensional linear system \eqref{PC} is well-posed, i.e., there exists a unique solution $(\bm{X}^{m+1}, \kappa^{m+1}) \in \mathbb{X}^{h} \times \mathbb{K}^h$.
\end{thm}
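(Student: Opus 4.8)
The plan is to reduce well-posedness of \eqref{PC} to a uniqueness statement for a homogeneous linear system. Since the test spaces $\mathbb{K}^h$ (for \eqref{PCa}) and $\mathbb{K}^h\times\mathbb{K}_0^h$ (for \eqref{PCb}) together have the same dimension as the solution space $\mathbb{X}^h\times\mathbb{K}^h$, \eqref{PC} is a square linear system, so it suffices to show that its homogeneous version — obtained by setting $\bm{X}^m$, $\kappa^m$, the $\sigma$-terms and the old contact points $(x^m)_c^l,(x^m)_c^r$ to zero — has only the trivial solution. Writing $\bm{X}=(x,y)^T$ for such a homogeneous solution $(\bm{X},\kappa)\in\mathbb{X}^h\times\mathbb{K}^h$, it satisfies
$$\frac1\tau\langle\bm{X},\widetilde{\bm{n}}^{m+1/2}\psi^h\rangle^h_{\widetilde{\Gamma}^{m+1/2}}+\frac12\langle\partial_s\kappa,\partial_s\psi^h\rangle_{\widetilde{\Gamma}^{m+1/2}}=0,\qquad\forall\,\psi^h\in\mathbb{K}^h,$$
$$\frac12\langle\kappa\,\widetilde{\bm{n}}^{m+1/2},\boldsymbol{\omega}^h\rangle^h_{\widetilde{\Gamma}^{m+1/2}}-\frac12\langle\partial_s\bm{X},\partial_s\boldsymbol{\omega}^h\rangle_{\widetilde{\Gamma}^{m+1/2}}-\frac1{\eta\tau}\bigl[x(0)\,\omega_1^h(0)+x(1)\,\omega_1^h(1)\bigr]=0$$
for all $\boldsymbol{\omega}^h=(\omega_1^h,\omega_2^h)^T\in\mathbb{X}^h$.

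The central step is an energy-type test mirroring the proof of Theorem~4.1 in \cite{Zhao2021}: I would take $\psi^h=\kappa$ in the first identity and $\boldsymbol{\omega}^h=\bm{X}$ in the second, use the symmetry of the mass-lumped inner product to cancel the common term $\langle\bm{X},\widetilde{\bm{n}}^{m+1/2}\kappa\rangle^h_{\widetilde{\Gamma}^{m+1/2}}$, and arrive at
$$-\frac{\tau}{4}\langle\partial_s\kappa,\partial_s\kappa\rangle_{\widetilde{\Gamma}^{m+1/2}}-\frac12\langle\partial_s\bm{X},\partial_s\bm{X}\rangle_{\widetilde{\Gamma}^{m+1/2}}-\frac1{\eta\tau}\bigl(x(0)^2+x(1)^2\bigr)=0.$$
Because $\eta,\tau>0$, the three summands are all nonpositive, so each vanishes. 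Condition (ii) is what makes $\widetilde{\bm{n}}^{m+1/2}$ and $\partial_s$ well defined on $\widetilde{\Gamma}^{m+1/2}$ and what lets me pass from $\langle\partial_s\kappa,\partial_s\kappa\rangle=0$ to $\partial_\rho\kappa\equiv0$ on every $I_j$; by continuity $\kappa$ is then a constant $c$ on $\mathbf{I}$. Likewise $\langle\partial_s\bm{X},\partial_s\bm{X}\rangle=0$ forces $\bm{X}$ to be a constant vector, and since $y\in\mathbb{K}_0^h$ and $x(0)=x(1)=0$, that constant must be $\bm{0}$.

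It remains to rule out $c\ne0$, and this is the one point I expect to require care — condition (i) is tailored precisely for it, while the rest is routine finite-dimensional linear algebra plus the energy manipulation carried over from \cite{Zhao2021}. With $\bm{X}\equiv\bm{0}$ the second homogeneous identity reduces to $\tfrac{c}{2}\,\langle\widetilde{\bm{n}}^{m+1/2},\boldsymbol{\omega}^h\rangle^h_{\widetilde{\Gamma}^{m+1/2}}=0$ for all $\boldsymbol{\omega}^h\in\mathbb{X}^h$. By (i), at least one of $\widetilde{n}^{m+1/2}_{1,1}$ and $\widetilde{n}^{m+1/2}_{N,1}$ is nonzero; testing with the boundary basis function $\boldsymbol{\omega}^h=(\phi_0^h,0)^T$ in the first case (respectively $(\phi_N^h,0)^T$ in the second) and evaluating the mass-lumped product against the piecewise-constant normal leaves only $\tfrac12|\widetilde{\bm{h}}^{m+1/2}_1|\,\widetilde{n}^{m+1/2}_{1,1}\ne0$ (respectively $\tfrac12|\widetilde{\bm{h}}^{m+1/2}_N|\,\widetilde{n}^{m+1/2}_{N,1}\ne0$), which is impossible unless $c=0$. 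Hence $\kappa\equiv0$ as well, the homogeneous system admits only the zero solution, so the coefficient matrix is nonsingular and \eqref{PC} has a unique solution $(\bm{X}^{m+1},\kappa^{m+1})\in\mathbb{X}^h\times\mathbb{K}^h$.
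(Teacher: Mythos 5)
Your proposal is correct and follows essentially the same route as the paper: reduce to the homogeneous square system, test with $\psi^h=\kappa$ and $\boldsymbol{\omega}^h=\bm{X}$ to force $x(0)=x(1)=0$, constant $\bm{X}\equiv\bm{0}$ and constant $\kappa$, then use a boundary test function together with conditions (i)--(ii) to rule out a nonzero constant curvature. The only cosmetic difference is that you treat the two boundary cases with separate hat-function tests, while the paper uses a single test function taking the values $(\widetilde{n}^{m+1/2}_{1,1},0)^T$ and $(\widetilde{n}^{m+1/2}_{N,1},0)^T$ at the endpoints.
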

\begin{proof}
To establish well-posedness, it suffices to demonstrate that the following homogeneous linear system for $(\bm{X},\kappa)\in\mathbb{X}^{h}\times\mathbb{K}^h$ admits only the trivial solution:
\begin{subequations}\label{PC-well}
\begin{align}
 \left\langle\bm{X},\widetilde{\bm{n}}^{m+1/2}\psi^{h}\right\rangle_{\widetilde{\Gamma}^{m+1/2}}^{h}&+\tau/2\left\langle\partial_{s}\kappa,\partial_{s}\psi^{h}\right\rangle_{\widetilde{\Gamma}^{m+1/2}}=0\quad\forall ~\psi^h\in \mathbb{K}^h,\label{PC-wella}\\
 \left\langle\kappa\widetilde{\bm{n}}^{m+1/2},\boldsymbol{\omega}^{h}\right\rangle_{\widetilde{\Gamma}^{m+1/2}}^{h}&-\left\langle\partial_{s}\bm{X},\partial_{s}\boldsymbol{\omega}^{h}\right\rangle_{\widetilde{\Gamma}^{m+1/2}}
-\frac{2}{\eta\tau}\Bigr[x_c^l\,\omega_1^h(0) + x_c^r\,\omega_1^h(1)\Bigr] =0\quad\forall~\boldsymbol{\omega}^h
	\in\mathbb{X}^{h},\label{PC-wellb}
\end{align}
\end{subequations}
where $\bm{X} = (x, y)^T$ satisfies $y(\rho=0) = y(\rho=1) = 0$, and
\begin{equation}\label{eq:20}
x_{c}^l = x(\rho=0)=x(0), \quad x_{c}^r = x(\rho=1)=x(1).
\end{equation}
Choosing the test functions $\psi^{h} = \kappa \in \mathbb{K}^{h}$ in \eqref{PC-wella} and $\boldsymbol{\omega}^{h} = \bm{X} \in \mathbb{X}^{h}$ in \eqref{PC-wellb}, we obtain
\begin{equation}\label{eq:1}
\tau/2\left\langle\partial_{s}\kappa,\partial_{s}\kappa\right\rangle_{\widetilde{\Gamma}^{m+1/2}} + \left\langle\partial_{s}\bm{X},\partial_{s}\bm{X}\right\rangle_{\widetilde{\Gamma}^{m+1/2}} + \frac{2}{\eta\tau}\left[x_c^l x(0) + x_c^r x(1)\right] = 0.
\end{equation}
Since $\bm{X} \in \mathbb{X}^{h}$, $\kappa \in \mathbb{K}^{h}$, and from notation \eqref{eq:20}, it follows that
\begin{equation}\label{eq:2}
x_c^l = x_c^r = 0, \quad \bm{X}(\rho) \equiv \bm{X}^{c} \in \mathbb{R}^2, \quad \kappa(\rho) \equiv \kappa^{c} \in \mathbb{R} \quad \forall \rho \in [0,1].
\end{equation}
It is obvious that $\bm{X}^{c} =\bm{X}(\rho=0)=\bm{0}$, since $x(\rho=0)=x_c^l =0$ and $y(\rho=0)=0$.

We now prove that $\kappa^{c} = 0$. Substituting \eqref{eq:2} into \eqref{PC-wellb} yields
\begin{equation}\label{eq:3}
0 = \left\langle\kappa^{c}\widetilde{\bm{n}}^{m+1/2},\boldsymbol{\omega}^{h}\right\rangle_{\widetilde{\Gamma}^{m+1/2}}^{h} = \kappa^{c} \left\langle\widetilde{\bm{n}}^{m+1/2},\boldsymbol{\omega}^{h}\right\rangle_{\widetilde{\Gamma}^{m+1/2}}^{h}, \quad \forall ~\boldsymbol{\omega}^{h} \in \mathbb{X}^{h}.
\end{equation}
Now, choose the test function $\boldsymbol{\omega}^{h} \in \mathbb{X}^{h}$ such that
\begin{equation}\label{eq:4}
\boldsymbol{\omega}^{h}(\rho=0) = (\widetilde{n}^{m+1/2}_{1,1}, 0)^{\top}, \quad \boldsymbol{\omega}^{h}(\rho=1) = (\widetilde{n}^{m+1/2}_{N,1}, 0)^{\top}, \quad \boldsymbol{\omega}^{h}(\rho=\rho_j) = \bm{0} \quad \text{for } 1 \leq j \leq N-1.
\end{equation}
Then, from \eqref{equ:well-posedness} and the definition of the mass-lumped inner product, we have
\begin{align}
\left\langle\widetilde{\bm{n}}^{m+1/2},\boldsymbol{\omega}^{h}\right\rangle_{\widetilde{\Gamma}^{m+1/2}}^{h}
&= \frac{1}{2} \sum_{j=1}^{N} |\widetilde{\bm{h}}^{m+1/2}_{j}| \left[ \boldsymbol{\omega}^{h}(\rho_{j-1}) \cdot \widetilde{\bm{n}}^{m+1/2}_{j} + \boldsymbol{\omega}^{h}(\rho{j}) \cdot \widetilde{\bm{n}}^{m+1/2}_{j} \right] \notag \\
&= \frac{1}{2} |\widetilde{\bm{h}}^{m+1/2}_{1}| \boldsymbol{\omega}^{h}(0) \cdot \widetilde{\bm{n}}^{m+1/2}_{1} + \frac{1}{2} |\widetilde{\bm{h}}^{m+1/2}_{N}| \boldsymbol{\omega}^{h}(1) \cdot \widetilde{\bm{n}}^{m+1/2}_{N} \notag \\
&= \frac{1}{2} |\widetilde{\bm{h}}^{m+1/2}_{1}| (\widetilde{n}^{m+1/2}_{1,1})^2 + \frac{1}{2} |\widetilde{\bm{h}}^{m+1/2}_{N}| (\widetilde{n}^{m+1/2}_{N,1})^2 \notag \\
&\geq C \left[ (\widetilde{n}^{m+1/2}_{1,1})^2 + (\widetilde{n}^{m+1/2}_{N,1})^2 \right] > 0, \label{eq:5}
\end{align}
where $C = \frac{1}{2} \min\left\{ |\widetilde{\bm{h}}^{m+1/2}_{1}|,|\widetilde{\bm{h}}^{m+1/2}_{N}|\right\}$. Combining \eqref{eq:3} and \eqref{eq:5}, we conclude that $\kappa^{c} = 0$, and hence $\kappa(\rho)\equiv 0$ for all $\rho \in [0,1]$.
This completes the proof of well-posedness.

\end{proof}

 Define the mesh ratio indicator $\Psi^{m}$ for the curve $\Gamma^{m}$ as
\begin{equation}\label{116}
\Psi^{m}=\frac{\max_{1\leq i\leq N}| \bm{h}_{i}^{m}|}{\min_{1\leq i\leq N}| \bm{h}_{i}^{m}|}, ~\text{with}~| \bm{h}_{i}^{m}|=|\vec X^m(\rho_i)-\vec X^m(\rho_{i-1})|\quad m\geq0.
\end{equation}
Similar to previous work \cite{Bao2021, Zhao2021, Gan2025}, we have the following proposition regarding the long-time behavior and equilibrium state of the solution to \eqref{PC}.

\begin{prop}\label{prop1}
Let $(\bm{X}^{m},\kappa^{m})\in\mathbb{X}^{h}\times\mathbb{K}^{h}$ be a solution to \eqref{PC} with $\min_{1\leq j\leq N}| \bm{h}_{j}^{m} |>0$ for all $m\geq0$. Suppose that as $m\rightarrow \infty$, $\bm{X}^{m}$ and $\kappa^{m}$ converge to an equilibrium state $\Gamma^{e}=\bm{X}^{e}=(x^{e},y^{e})^{\top}\in \mathbb{X}^{h}$ and $\kappa^{e}\in \mathbb{K}^{h}$, respectively, and that the predictor $\widetilde{\bm{X}}^{m+1/2}$ also converges to $\Gamma^e$. Assume further that $\min_{1\leq i\leq N}| \bm{h}_{i}^{e}|>0$, where $\bm{h}_{i}^{e}=\bm{X}^{e}(\rho_{i})-\bm{X}^{e}(\rho_{i-1})$ for $1\leq i\leq N$. Then the following hold:
\begin{align}
\kappa^{e}(\rho)&\equiv \kappa^{c},~\forall\rho\in[0,1],~\text{with}~\kappa^{c}\neq 0~\text{a constant}, \label{23a}\\
\lim_{m\rightarrow+\infty}\Psi^{m}&=\Psi^{e}=\frac{\max_{1\leq i\leq N}| \bm{h}_{i}^{e}|}{\min_{1\leq i\leq N}| \bm{h}_{i}^{e}|}=1,\label{23b}
\end{align}
Moreover, let $\theta_{e}^{l}$ and $\theta_{e}^{r}$ be the left and right contact angles of $\Gamma^e$, respectively. Then there exist constants $h_{0} > 0$ and $C_0 > 0$ such that for all $0 < h < h_{0}$,
\begin{equation}\label{23c}
|\cos(\theta_{e}^{l})-\sigma|\leq C_{0}h \quad \text{and} \quad |\cos(\theta_{e}^{r})-\sigma|\leq C_{0}h.
\end{equation}
\end{prop}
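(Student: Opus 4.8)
The plan is to pass to the limit $m\to\infty$ in the PC-ZJB scheme \eqref{PC} and then read off \eqref{23a}--\eqref{23c} by inserting carefully chosen test functions, mirroring the equilibrium analyses of \cite{Bao2021,Zhao2021,Gan2025}. Since $\bm X^m\to\bm X^e$, $\kappa^m\to\kappa^e$ and the predictor $\widetilde{\bm X}^{m+1/2}\to\bm X^e$, while $\min_j|\bm h_j^e|>0$ keeps every edge length (hence every denominator, every discrete normal, and the mass-lumped product) nondegenerate, each term in \eqref{PCa}--\eqref{PCb} converges: $\tfrac{1}{\tau}(\bm X^{m+1}-\bm X^m)\to\bm 0$, the two contact-point increments tend to $0$, $\tfrac12(\kappa^{m+1}+\kappa^m)\to\kappa^e$, $\tfrac12(\bm X^{m+1}+\bm X^m)\to\bm X^e$, and the bilinear forms together with $\partial_s$ and $\widetilde{\bm n}^{m+1/2}$ on $\widetilde{\Gamma}^{m+1/2}$ converge to their counterparts on $\Gamma^e$. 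The limiting system is
\begin{align*}
\langle\partial_s\kappa^e,\partial_s\psi^h\rangle_{\Gamma^e}&=0 &&\forall\,\psi^h\in\mathbb K^h,\\
\langle\kappa^e\bm n^e,\boldsymbol\omega^h\rangle_{\Gamma^e}^h-\langle\partial_s\bm X^e,\partial_s\boldsymbol\omega^h\rangle_{\Gamma^e}+\sigma[\omega_1^h(1)-\omega_1^h(0)]&=0 &&\forall\,\boldsymbol\omega^h\in\mathbb X^h,
\end{align*}
with $\bm n^e$ and $\partial_s$ built on $\Gamma^e$.

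For \eqref{23a}: taking $\psi^h=\kappa^e$ in the first limiting equation gives $\langle\partial_s\kappa^e,\partial_s\kappa^e\rangle_{\Gamma^e}=0$, so (using $\min_j|\bm h_j^e|>0$) $\partial_s\kappa^e\equiv0$ on every $I_j$ and, by continuity, $\kappa^e\equiv\kappa^c$ for a constant $\kappa^c$. To exclude $\kappa^c=0$, suppose it holds; the second limiting equation reduces to $\langle\partial_s\bm X^e,\partial_s\boldsymbol\omega^h\rangle_{\Gamma^e}=\sigma[\omega_1^h(1)-\omega_1^h(0)]$. Choosing $\boldsymbol\omega^h=(0,y^e)^T$ forces $\langle\partial_s y^e,\partial_s y^e\rangle_{\Gamma^e}=0$, hence $y^e\equiv0$ (as $y^e(0)=0$); then $\bm X^e=(x^e,0)^T$, so $\partial_s x^e=\pm1$ on each edge, and choosing $\boldsymbol\omega^h=\bm X^e$ yields $L^e:=|\Gamma^e|=\sum_j|\bm h_j^e|=\sigma[(x^e)_c^r-(x^e)_c^l]$. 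But $0<L^e$, $|(x^e)_c^r-(x^e)_c^l|\le L^e$, $(x^e)_c^l\le(x^e)_c^r$, and $|\sigma|=|\cos\theta_i|<1$ (since $\theta_i\in(0,\pi)$) together give $L^e\le|\sigma|\,L^e<L^e$, a contradiction; hence $\kappa^c\neq0$.

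For \eqref{23b}: for an interior node $\rho_j$ ($1\le j\le N-1$) and any $\bm c\in\mathbb R^2$, inserting $\boldsymbol\omega^h=\phi_j^h\bm c\in\mathbb X^h$ (the $\sigma$-term drops since $\phi_j^h(0)=\phi_j^h(1)=0$) and evaluating over the two edges at $\rho_j$ gives $\tfrac{\kappa^c}{2}\bigl(|\bm h_j^e|\bm n_j^e+|\bm h_{j+1}^e|\bm n_{j+1}^e\bigr)=\bm h_j^e/|\bm h_j^e|-\bm h_{j+1}^e/|\bm h_{j+1}^e|$. Since $|\bm h_i^e|\bm n_i^e=-(\bm h_i^e)^{\bot}$, the left side is orthogonal to $\bm h_j^e+\bm h_{j+1}^e$; dotting the identity with $\bm h_j^e+\bm h_{j+1}^e$ and simplifying gives $(|\bm h_j^e|-|\bm h_{j+1}^e|)(1-\cos\alpha_j)=0$, where $\cos\alpha_j=\bm h_j^e\cdot\bm h_{j+1}^e/(|\bm h_j^e||\bm h_{j+1}^e|)$. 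If $\cos\alpha_j=1$ then $\bm h_j^e,\bm h_{j+1}^e$ are positive multiples of each other, so the right side of the identity is $\bm 0$ while the left equals $\tfrac{\kappa^c}{2}(|\bm h_j^e|+|\bm h_{j+1}^e|)\bm n_j^e\neq\bm 0$ (as $\kappa^c\neq0$), impossible; hence $|\bm h_j^e|=|\bm h_{j+1}^e|$ for all $j$, every edge equals $L^e h$, $\Psi^e=1$, and $\Psi^m\to\Psi^e$ by continuity. For \eqref{23c}: inserting $\boldsymbol\omega^h=(\phi_0^h,0)^T$ (only $I_1$ contributes, $\omega_1^h(0)=1$) and using the definition of the discrete left contact angle ($\cos\theta_e^l=(\bm h_1^e)_1/|\bm h_1^e|$) yields $\cos\theta_e^l-\sigma=-\tfrac{\kappa^c}{2}|\bm h_1^e|(\bm n_1^e)_1$, hence $|\cos\theta_e^l-\sigma|\le\tfrac12|\kappa^c|\,|\bm h_1^e|=\tfrac12|\kappa^c|L^e h$ by \eqref{23b}; symmetrically $|\cos\theta_e^r-\sigma|\le\tfrac12|\kappa^c|L^e h$ via $\boldsymbol\omega^h=(\phi_N^h,0)^T$. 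One then takes $C_0=\tfrac12|\kappa^c|L^e$ — finite and $\mathcal O(1)$ in $h$, since $\Gamma^e$ approximates the continuous circular-arc equilibrium — and any $h_0>0$.

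The main obstacle is the exclusion of $\kappa^c=0$ in the proof of \eqref{23a}: it is not forced by the limiting equations alone but requires the two-stage test ($\boldsymbol\omega^h=(0,y^e)^T$ to collapse the curve onto the substrate, then $\boldsymbol\omega^h=\bm X^e$), and the contradiction closes only because $\theta_i\in(0,\pi)$ strictly, i.e.\ $|\sigma|<1$ (geometrically, a flat film is never an equilibrium unless $\sigma=\pm1$). A secondary technical point is justifying that $\lim_{m\to\infty}$ commutes with the mass-lumped products and the metric-dependent objects $\partial_s,\widetilde{\bm n}^{m+1/2}$ assembled on the moving predictor curve $\widetilde{\Gamma}^{m+1/2}$; this rests on the hypotheses $\widetilde{\bm X}^{m+1/2}\to\Gamma^e$ and $\min_j|\bm h_j^e|>0$, the latter preventing degeneracy of $\langle\cdot,\cdot\rangle_{\Gamma^e}^h$ and of all edge-length denominators.
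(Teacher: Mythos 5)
Your proof is correct and begins exactly as the paper does: pass to the limit $m\to\infty$ in \eqref{PC}, using the convergence of $\bm X^m$, $\kappa^m$ and of the predictor together with $\min_i|\bm h_i^e|>0$, to arrive at the limiting system \eqref{long-time}. At that point the paper simply observes that this coincides with the equilibrium system of the ZJB scheme and cites Proposition 3.3 of Zhao et al.\ (2021) for \eqref{23a}--\eqref{23c}, whereas you reconstruct that cited argument in full: $\psi^h=\kappa^e$ for constancy of $\kappa^e$, the two-stage test $(0,y^e)^T$ then $\bm X^e$ combined with $|\sigma|<1$ to rule out $\kappa^c=0$, the interior hat-function test giving $(|\bm h_j^e|-|\bm h_{j+1}^e|)(1-\cos\alpha_j)=0$ and hence equidistribution, and the boundary hat functions giving $\cos\theta_e^{l,r}-\sigma=\mp\tfrac{\kappa^c}{2}|\bm h_{1,N}^e|(\bm n_{1,N}^e)_1$. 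These computations are all consistent with the discrete definitions in the paper ($|\bm h_j|\bm n_j=-\bm h_j^\perp$, the mass-lumped product, $\cos\theta_d^l=\partial_s x|_{I_1}$), so your in-line proof matches the content of the cited result. The only soft spot is the last step of \eqref{23c}: your constant $C_0=\tfrac12|\kappa^c|L^e$ is built from the discrete equilibrium and its uniform boundedness as $h\to0$ is asserted (via proximity to the continuous Wulff arc) rather than proved; this is the same level of rigor as the reference the paper leans on, but it is the one claim in your write-up that is not self-contained.
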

\begin{proof}
Under the assumptions of this proposition, passing to the limit as $m\rightarrow+\infty$ in \eqref{PC}, we obtain that the equilibrium solution $(\bm{X}^{e},\kappa^{e})\in \mathbb{X}^{h}\times\mathbb{K}^{h}$ satisfies the following variational problem:
\begin{subequations}
\begin{align}
\left\langle\partial_{s}\kappa^{e},\partial_{s}\psi^{h}\right\rangle_{\Gamma^{e}}
 &=0\quad\forall~\psi^{h}\in \mathbb{K}^{h},\label{long-timea}\\
 \left\langle\kappa^{e}\bm{n}^{e},\boldsymbol{\omega}^{h}\right\rangle_{\Gamma^{e}}^{h}
 -\left\langle\partial_{s}\bm{X}^{e},\partial_{s}\boldsymbol{\omega}^{h}\right\rangle_{\Gamma^{e}}+\sigma\,\Bigl[\omega_1^h(1) - \omega_1^h(0)\Bigr]
 &=0\quad\forall~\boldsymbol{\omega}^{h}=(\omega_1^h,~\omega_2^h)^T\in \mathbb{X}^{h}.\label{long-timeb}
\end{align}
\label{long-time}
\end{subequations}
The equilibrium equation \eqref{long-time} agrees with its counterpart in the ZJB scheme \eqref{eqn:dis2d}. The proof can be found in Proposition 3.3 of \cite{Zhao2021}.
\end{proof}
\subsection{Backward differentiation formulae time discretization}
Following the methodology in \cite{Jiang-BDF2024}, the backward differentiation formula (BDF) is employed to discretize the semi-discrete scheme \eqref{eqn:2dsemi} at the time level $t_{m+1}$. Taking BDF2 as an example, the time derivative terms in \eqref{eqn:2dsemi} can be approximated as follows:
\begin{subequations}
	\begin{align}
		\partial_{t} \mathbf{X}^{h}\left(t_{m+1}\right) &\sim \frac{\frac{3}{2}\mathbf{X}^{h}(t_{m+1})-2\mathbf{X}^{h}(t_m)+\frac{1}{2}\mathbf{X}^{h}(t_{m-1})}{\tau}+\mathcal{O}\left(\tau^{2}\right),\label{eq:bdf2-1}\\
		\frac{\mathrm{d} (x^h)_c^l}{\mathrm{d} t}\left(t_{m+1}\right)&\sim \frac{\frac{3}{2}(x^{h})_c^l(t_{m+1})-2(x^{h})_c^l(t_{m})+\frac{1}{2}(x^{h})_c^l(t_{m-1})}{\tau}+\mathcal{O}\left(\tau^{2}\right),\label{eq:bdf2-2}\\
		\frac{\mathrm{d} (x^h)_c^r}{\mathrm{d} t}\left(t_{m+1}\right)&\sim \frac{\frac{3}{2}(x^{h})_c^r(t_{m+1})-2(x^{h})_c^r(t_{m})+\frac{1}{2}(x^{h})_c^r(t_{m-1})}{\tau}+\mathcal{O}\left(\tau^{2}\right).\label{eq:bdf2-3}
	\end{align}
\end{subequations}

Inspired by the above approximations, we derive the following scheme (denoted as the \textbf {BDF$k$-ZJB scheme}):
Given an initial curves $\Gamma^p = \vec{X}^{p} \in \mathbb{K}^h \times \mathbb{K}_0^h$ with $(x^p)_c^l < (x^p)_c^r$ for $0 \leq p \leq k-1$, for each $m \geq k-1$, find the evolving curve $\Gamma^{m+1} = \vec{X}^{m+1}=(x^{m+1},y^{m+1})^{T} \in \mathbb{K}^h \times \mathbb{K}_0^h$ and the curvature $\kappa^{m+1} \in \mathbb{K}^h$ such that
\begin{subequations}
	\label{BDFk}
	\begin{align}
	\label{BDFka}
	& \left\langle\frac{a\bm{X}^{m+1}-\bm{\widehat{X}}^{m}}{\tau},\bm{\tilde{n}}^{m+1}\psi^{h}\right\rangle_{\widetilde{\Gamma}^{m+1}}^{h}+\left\langle\partial_{s}\kappa^{m+1},\partial_{s}\psi^{h}\right\rangle_{\widetilde{\Gamma}^{m+1}}
 =0 \quad\forall~ \psi^h\in \mathbb{K}^h,\\[0.5em]
\label{BDFkb}
& \left\langle\kappa^{m+1}\bm{\tilde{n}}^{m+1},\boldsymbol{\omega}^{h}\right\rangle_{\widetilde{\Gamma}^{m+1}}^{h}-\left\langle
 \partial_{s}\bm{X}^{m+1},\partial_{s}\boldsymbol{\omega}^{h}\right\rangle_{\widetilde{\Gamma}^{m+1}}+\sigma\,\Bigl[\omega_1^h(1) - \omega_1^h(0)\Bigr] \nonumber \\
	&-\frac{1}{\eta\tau}\Bigr[(a(x^{m+1})_c^l-(\widehat{x}^{m})_c^l)\,\omega_1^h(0) + (a(x^{m+1})_c^l-(\widehat{x}^{m})_c^l)\,\omega_1^h(1)\Bigr]=0 \quad\forall~\boldsymbol{\omega}^h
	\in\mathbb{K}^h\times\mathbb{K}_0^h,
	\end{align}
\end{subequations}
where the coefficients $a$ and $\bm{\widehat{X}}^{m}$ for different BDF orders are defined as:
\begin{subequations}
 \begin{align}
 &\text{BDF2}: a=\frac{3}{2},~\bm{\widehat{X}}^{m}=2\bm{X}^{m}-\frac{1}{2}\bm{X}^{m-1},\label{117a}\\
 &\text{BDF3}: a=\frac{11}{6},~\bm{\widehat{X}}^{m}=3\bm{X}^{m}-\frac{3}{2}\bm{X}^{m-1}+\frac{1}{3}\bm{X}^{m-2},\label{117b}\\
 &\text{BDF4}: a=\frac{25}{12},~\bm{\widehat{X}}^{m}=4\bm{X}^{m}-3\bm{X}^{m-1}+\frac{4}{3}\bm{X}^{m-2}-\frac{1}{4}\bm{X}^{m-3}.\label{117c}
 \end{align}
 \label{117}
 \end{subequations}
Here, $(\widehat{x}^{m})_c^l$ and $(\widehat{x}^{m})_c^r$ are obtained by replacing the vector components $\bm{X}^{m-p}$ in $\hat{\bm{X}}^{m}$ with $(x^{m-p})_c^l$ and $(x^{m-p})_c^r$ for $p = 0, \ldots, k-1$, respectively. The predicted curve $\widetilde{\Gamma}^{m+1}$, parameterized by $\widetilde{\bm{X}}^{m+1}$, is generated by the BDF$(k-1)$-ZJB scheme. Note that its first-order case (BDF1-ZJB) corresponds to the original ZJB scheme in \eqref{eqn:dis2d}. The normal vector $\widetilde{\bm{n}}^{m+1}$ and the partial derivative $\partial_{s}$ are both defined piecewise on $\widetilde{\Gamma}^{m+1}$ as follows:
 $$\widetilde{n}^{m+1}\mid_{I_{j}}=-\frac{(\partial_{\rho}\widetilde{\bm{X}}^{m+1})^{\bot}}{|\partial_{\rho}\widetilde{\bm{X}}^{m+1}|}\bigg|_{I_{j}}=-\frac{(\widetilde{\bm{h}}^{m+1}_{j})^{\bot}}{|\widetilde{\bm{h}}^{m+1}_{j}|}=\widetilde{\bm{n}}_j^{m+1},\quad
\partial_{s}f\mid_{I_{j}}=\frac{\partial_{\rho}f}{|\partial_{\rho}\widetilde{\bm{X}}^{m+1}|}\bigg|_{I_{j}}=\frac{h\partial_{\rho}f|_{I_{j}}}{|\widetilde{\bm{h}}^{m+1}_{j}|},$$
for $j=1,\ldots,N$. The initial curves $\Gamma^{p}$ for $1 \leq p \leq k-1$ can be obtained, with implementation details provided in \cite{Jiang-BDF2024}.

Similar to the proof of Theorem \ref{thm:PC}, the well-posedness of the BDFk-ZJB scheme \eqref{BDFk} can be established under mild assumptions on the predicted curve $\widetilde{\Gamma}^{m+1}$.
The normal vector $\widetilde{\bm{n}}_j^{m+1} = (\widetilde{n}^{m+1}_{j,1}, \widetilde{n}^{m+1}_{j,2})^{T}$ of $\widetilde{\bm{X}}^{m+1}$ on the interval $I_j$ is defined for each $j = 1, \ldots, N$.

\begin{thm}(Well-posedness)\label{thm:BDF}
Assume that the following conditions are satisfied:
%\begin{equation}\label{equ:well-posedness-BDF}
%(i). (\widetilde{n}^{m+1}_{1,1})^{2}+(\widetilde{n}^{m+1/2}_{N,1})^{2}>0;\quad
%(ii).\min_{1\leq j \leq N}|\widetilde{\bm{h}}^{m+1}_{j}|=\min_{1\leq j \leq N}|\widetilde{\bm{X}}^{m+1}(\rho_{j})-\widetilde{\bm{X}}^{m+1}(\rho_{j-1})|>0.
%\end{equation}
\begin{equation}\label{equ:well-posedness-BDF}
\begin{aligned}
&(i) \quad (\widetilde{n}^{m+1}_{1,1})^{2} + (\widetilde{n}^{m+1}_{N,1})^{2} > 0; \\[0.5em]
&(ii) \quad \min_{1\leq j \leq N} |\widetilde{\bm{h}}^{m+1}_{j}| = \min_{1\leq j \leq N} |\widetilde{\bm{X}}^{m+1}(\rho_{j}) - \widetilde{\bm{X}}^{m+1}(\rho_{j-1})| > 0.
\end{aligned}
\end{equation}
Then the $N$-dimensional linear system \eqref{BDFk} is well-posed, i.e., there exists a unique solution $(\bm{X}^{m+1}, \kappa^{m+1}) \in \mathbb{X}^{h} \times \mathbb{K}^h$.
\end{thm}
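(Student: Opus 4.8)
The plan is to mimic exactly the proof of Theorem \ref{thm:PC}, since the BDF$k$-ZJB scheme \eqref{BDFk} differs from the PC-ZJB scheme \eqref{PC} only in the coefficient structure of the time-stepping and the choice of reference curve $\widetilde{\Gamma}^{m+1}$ in place of $\widetilde{\Gamma}^{m+1/2}$. First I would reduce well-posedness of the linear system to the statement that the associated homogeneous system admits only the trivial solution: since \eqref{BDFk} is a square ($N$-dimensional) linear system for $(\bm{X}^{m+1},\kappa^{m+1})\in\mathbb{X}^h\times\mathbb{K}^h$, uniqueness is equivalent to existence. The homogeneous system reads
\begin{subequations}\label{BDF-well}
\begin{align}
 \left\langle a\bm{X},\widetilde{\bm{n}}^{m+1}\psi^{h}\right\rangle_{\widetilde{\Gamma}^{m+1}}^{h}&+\tau\left\langle\partial_{s}\kappa,\partial_{s}\psi^{h}\right\rangle_{\widetilde{\Gamma}^{m+1}}=0\quad\forall ~\psi^h\in \mathbb{K}^h,\label{BDF-wella}\\
 \left\langle\kappa\widetilde{\bm{n}}^{m+1},\boldsymbol{\omega}^{h}\right\rangle_{\widetilde{\Gamma}^{m+1}}^{h}&-\left\langle\partial_{s}\bm{X},\partial_{s}\boldsymbol{\omega}^{h}\right\rangle_{\widetilde{\Gamma}^{m+1}}
-\frac{a}{\eta\tau}\Bigr[x_c^l\,\omega_1^h(0) + x_c^r\,\omega_1^h(1)\Bigr] =0\quad\forall~\boldsymbol{\omega}^h\in\mathbb{X}^{h},\label{BDF-wellb}
\end{align}
\end{subequations}
where $\bm{X}=(x,y)^T$ with $y(0)=y(1)=0$ and $x_c^l=x(0)$, $x_c^r=x(1)$.

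Next I would test \eqref{BDF-wella} with $\psi^h=\kappa$ and \eqref{BDF-wellb} with $\boldsymbol{\omega}^h=\bm{X}$, then form the combination (dividing the first by $a>0$ is unnecessary; one simply adds) so that the mixed terms $\langle\kappa\widetilde{\bm{n}}^{m+1},\bm{X}\rangle^h$ cancel, yielding
\[
\tau\left\langle\partial_{s}\kappa,\partial_{s}\kappa\right\rangle_{\widetilde{\Gamma}^{m+1}} + a\left\langle\partial_{s}\bm{X},\partial_{s}\bm{X}\right\rangle_{\widetilde{\Gamma}^{m+1}} + \frac{a}{\eta\tau}\left[(x_c^l)^2 + (x_c^r)^2\right] = 0.
\]
All three terms are nonnegative because $a,\tau,\eta>0$ and condition (ii) guarantees the inner products are genuine (positive semi-definite) via $|\widetilde{\bm{h}}^{m+1}_j|>0$; hence each vanishes. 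From $\langle\partial_s\bm{X},\partial_s\bm{X}\rangle_{\widetilde{\Gamma}^{m+1}}=0$ and $\bm{X}\in\mathbb{X}^h$ we get $\bm{X}\equiv\bm{X}^c$ constant, and $\langle\partial_s\kappa,\partial_s\kappa\rangle=0$ gives $\kappa\equiv\kappa^c$ constant; moreover $(x_c^l)^2+(x_c^r)^2=0$ forces $x_c^l=x_c^r=0$, so together with $y(0)=0$ we conclude $\bm{X}^c=\bm{0}$.

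Finally I would show $\kappa^c=0$ exactly as in the proof of Theorem \ref{thm:PC}: substituting $\bm{X}\equiv\bm{0}$ into \eqref{BDF-wellb} gives $\kappa^c\langle\widetilde{\bm{n}}^{m+1},\boldsymbol{\omega}^h\rangle^h_{\widetilde{\Gamma}^{m+1}}=0$ for all $\boldsymbol{\omega}^h\in\mathbb{X}^h$; choosing $\boldsymbol{\omega}^h$ supported only at the two endpoints with $\boldsymbol{\omega}^h(0)=(\widetilde{n}^{m+1}_{1,1},0)^T$, $\boldsymbol{\omega}^h(1)=(\widetilde{n}^{m+1}_{N,1},0)^T$ and zero at interior nodes, the mass-lumped inner product evaluates to $\frac12|\widetilde{\bm{h}}^{m+1}_1|(\widetilde{n}^{m+1}_{1,1})^2+\frac12|\widetilde{\bm{h}}^{m+1}_N|(\widetilde{n}^{m+1}_{N,1})^2$, which is strictly positive by conditions (i) and (ii). Hence $\kappa^c=0$, the homogeneous system has only the trivial solution, and \eqref{BDFk} is well-posed. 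The only point requiring a little care — the "main obstacle," though it is minor — is bookkeeping the BDF coefficient $a$ (which takes the values $3/2,11/6,25/12$ for $k=2,3,4$) through the energy identity and confirming that positivity of $a$ is all that is used, so that the argument is genuinely uniform in $k$; the terms involving $\bm{\widehat{X}}^m$ and $(\widehat{x}^m)_c^{l,r}$ drop out of the homogeneous system entirely and play no role.
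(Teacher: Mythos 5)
Your proposal is correct and follows essentially the same route as the paper, which simply writes down the homogeneous system \eqref{BDF-well} and declares the rest analogous to Theorem~\ref{thm:PC}; you have filled in that analogy faithfully (test with $\psi^h=\kappa$ and $\boldsymbol{\omega}^h=\bm{X}$, conclude constancy, kill $\bm{X}$ via the contact-point terms and $y(0)=0$, then kill $\kappa^c$ with the endpoint test function using conditions (i)--(ii)). The only blemish is the bookkeeping of $a$ in the combined identity: the mixed terms cancel only after scaling one equation by $a$ (not by ``simply adding''), so the correct identity is, e.g., $\tau\langle\partial_s\kappa,\partial_s\kappa\rangle_{\widetilde{\Gamma}^{m+1}}+a\langle\partial_s\bm{X},\partial_s\bm{X}\rangle_{\widetilde{\Gamma}^{m+1}}+\frac{a^2}{\eta\tau}\bigl[(x_c^l)^2+(x_c^r)^2\bigr]=0$, but since $a,\tau,\eta>0$ this changes nothing in the argument.
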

\begin{proof}
To establish well-posedness, it suffices to demonstrate that the following homogeneous linear system
for $(\bm{X},\kappa)\in  \mathbb{X}^{h}\times \mathbb{K}^h$
has only the trivial solution:
\begin{subequations}\label{BDF-well}
 \begin{align}
 \left\langle a\bm{X},\widetilde{\bm{n}}^{m+1}\psi^{h}\right\rangle_{\widetilde{\Gamma}^{m+1}}^{h}&+\tau\left\langle\partial_{s}\kappa,\partial_{s}\psi^{h}\right\rangle_{\widetilde{\Gamma}^{m+1}}=0\quad
 \forall~\psi^{h}\in  \mathbb{K}^h,\label{BDF-wella}\\
 \left\langle\kappa\widetilde{\bm{n}}^{m+1},\boldsymbol{\omega}^{h}\right\rangle_{\widetilde{\Gamma}^{m+1}}^{h}&-\left\langle\partial_{s}\bm{X},\partial_{s}\boldsymbol{\omega}^{h}\right\rangle_{\widetilde{\Gamma}^{m+1}}
-\frac{a}{\eta\tau}\Bigr[x_c^l\,\omega_1^h(0) + x_c^r\,\omega_1^h(1)\Bigr] =0\quad \forall~\boldsymbol{\omega}^{h}\in \mathbb{X}^{h},\label{BDF-wellb}
 \end{align}
 \end{subequations}
where $\bm{X} = (x, y)^T$ satisfies $y(0) = y(1) = 0$, with $x_{c}^l = x(0)$ and $x_{c}^r = x(1)$.
The subsequent proof is analogous to that of Theorem \ref{thm:PC}.
\end{proof}

\begin{prop}\label{prop2}
Let $(\bm{X}^{m},\kappa^{m})\in\mathbb{X}^{h}\times\mathbb{K}^{h}$ be a solution to \eqref{BDFk} with $\min_{1\leq j\leq N}| \bm{h}_{j}^{m} |>0$ for all $m\geq0$. Suppose that as $m\rightarrow \infty$, $\bm{X}^{m}$ and $\kappa^{m}$ converge to an equilibrium state $\Gamma^{e}=\bm{X}^{e}=(x^{e},y^{e})^{\top}\in \mathbb{X}^{h}$ and $\kappa^{e}\in \mathbb{K}^{h}$, respectively, and that the predictor $\widetilde{\bm{X}}^{m+1}$ also converges to $\Gamma^e$. Assume further that $\min_{1\leq i\leq N}| \bm{h}_{i}^{e}|>0$, where $\bm{h}_{i}^{e}=\bm{X}^{e}(\rho_{i})-\bm{X}^{e}(\rho_{i-1})$ for $1\leq i\leq N$. Then the following hold:
\begin{align}
\kappa^{e}(\rho)&\equiv \kappa^{c},~\forall\rho\in[0,1],~\text{with}~\kappa^{c}\neq 0~\text{a constant}, \label{24a}\\
\lim_{m\rightarrow+\infty}\Psi^{m}&=\Psi^{e}=\frac{\max_{1\leq i\leq N}| \bm{h}_{i}^{e}|}{\min_{1\leq i\leq N}| \bm{h}_{i}^{e}|}=1,\label{24b}
\end{align}
Moreover, let $\theta_{e}^{l}$ and $\theta_{e}^{r}$ be the left and right contact angles of $\Gamma^e$, respectively. Then there exist constants $h_{0} > 0$ and $C_0 > 0$ such that for all $0 < h < h_{0}$,
\begin{equation}\label{24c}
|\cos(\theta_{e}^{l})-\sigma|\leq C_{0}h \quad \text{and} \quad |\cos(\theta_{e}^{r})-\sigma|\leq C_{0}h.
\end{equation}
\end{prop}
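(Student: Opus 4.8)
The plan is to mirror the proof of Proposition \ref{prop1}: pass to the limit $m \to \infty$ in the BDF$k$-ZJB scheme \eqref{BDFk}, identify the resulting equilibrium variational problem, and observe that it coincides with the equilibrium system \eqref{long-time} already obtained for the ZJB scheme, so that \eqref{24a}--\eqref{24c} follow from Proposition 3.3 of \cite{Zhao2021}.

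The one genuinely new ingredient is the limiting behaviour of the BDF history terms. For each $k \in \{2,3,4\}$ the coefficients defining $\widehat{\bm{X}}^{m}$ in \eqref{117} sum to $a$ (equivalently, BDF$k$ is exact on constants, hence annihilates a stationary sequence). Therefore, under the standing hypotheses $\bm{X}^{m} \to \bm{X}^{e}$, $\kappa^{m} \to \kappa^{e}$, and $\widetilde{\bm{X}}^{m+1} \to \Gamma^{e}$ as $m \to \infty$, one gets $\widehat{\bm{X}}^{m} \to a\,\bm{X}^{e}$, so $\tfrac{1}{\tau}\big(a\bm{X}^{m+1} - \widehat{\bm{X}}^{m}\big) \to \bm{0}$; similarly $a(x^{m+1})_{c}^{l} - (\widehat{x}^{m})_{c}^{l} \to 0$ and $a(x^{m+1})_{c}^{r} - (\widehat{x}^{m})_{c}^{r} \to 0$, so the discrete contact-point velocity terms $\tfrac{1}{\eta\tau}\big(a(x^{m+1})_{c}^{l,r} - (\widehat{x}^{m})_{c}^{l,r}\big)$ vanish in the limit. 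Since $\min_{1\leq i\leq N}|\bm{h}_{i}^{e}| > 0$, the mass-lumped inner products, the piecewise normal $\widetilde{\bm{n}}^{m+1}$, and the operators $\partial_{s}$ on $\widetilde{\Gamma}^{m+1}$ all converge to their counterparts on $\Gamma^{e}$; passing to the limit termwise in \eqref{BDFka}--\eqref{BDFkb} then shows that $(\bm{X}^{e},\kappa^{e}) \in \mathbb{X}^{h}\times\mathbb{K}^{h}$ satisfies exactly the equilibrium system \eqref{long-time}.

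From this point the argument is word-for-word the ZJB case: \eqref{long-timea} forces $\partial_{s}\kappa^{e}\equiv 0$, hence $\kappa^{e}\equiv\kappa^{c}$ is a constant, with $\kappa^{c}\neq 0$ by nontriviality of the equilibrium, giving \eqref{24a}; testing \eqref{long-timeb} against the hat basis functions shows that $\Gamma^{e}$ is equidistributed, i.e.\ $\Psi^{e}=1$, giving \eqref{24b}; and choosing in \eqref{long-timeb} test functions localized near $\rho=0$ and $\rho=1$ yields the $\mathcal{O}(h)$ deviation from the discrete Young law, giving \eqref{24c}. All three are established in Proposition 3.3 of \cite{Zhao2021}, so it suffices to invoke that result. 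I do not expect any real obstacle: the only point requiring care is the elementary coefficient identity for the BDF history term recorded above, after which the proof is a transcription of the proof of Proposition \ref{prop1} with $\widetilde{\Gamma}^{m+1/2}$ replaced by $\widetilde{\Gamma}^{m+1}$ and the factor $\tau/2$ by $\tau$.
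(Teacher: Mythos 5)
Your proposal is correct and follows essentially the same route as the paper: pass to the limit in \eqref{BDFk}, observe that the scheme reduces to the equilibrium system \eqref{long-time}, and conclude by invoking Proposition 3.3 of \cite{Zhao2021}. The only difference is that you explicitly verify the BDF coefficient identity (the coefficients of $\widehat{\bm{X}}^{m}$ summing to $a$) that makes the discrete velocity terms vanish in the limit, a detail the paper leaves implicit; this is a harmless and indeed welcome addition.
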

\begin{proof}
Under the assumptions of this proposition, passing to the limit as $m \rightarrow +\infty$ in \eqref{BDFk} shows that the variational problem reduces to \eqref{long-time}.
\end{proof}

%==========================================================================================================================
\section{Numerical results}
In this section, we first conduct convergence rate tests for our proposed high-order temporal schemes. Subsequently, we report the temporal evolution of the total free energy, area, contact angles, and mesh ratio indicator. Finally, we apply the proposed schemes to simulate the evolution of complex curves.

All simulations are performed with a semi-elliptical initial shape, defined by a semi-major axis $a = 2$ and a semi-minor axis $b = 1$, unless otherwise stated. The contact line mobility is fixed at 
$\eta = 100$, and $\sigma=\cos(\theta_i)$, where $\theta_i$ is the Young's angle.

\subsection{Cauchy test for the convergence order}
Following the approach in \cite{Zhao2021,Jiang2024}, the error between two curves $\Gamma_{1}$ and $\Gamma_{2}$ is measured by their manifold distance. Specifically, let $\Omega_{1}$ and $\Omega_{2}$ be the regions bounded by $\Gamma_{1}$ and $\Gamma_{2}$, respectively. The manifold distance between $\Gamma_{1}$ and $\Gamma_{2}$ is defined as
$$\text{M}(\Gamma_{1},\Gamma_{2})=|(\Omega_{1}\setminus\Omega_{2})\cup(\Omega_{2}\setminus\Omega_{1})|
=2|\Omega_{1}\cup\Omega_{2}|-|\Omega_{1}|-|\Omega_{2}|,$$
where $|\Omega|$ denotes the area of $\Omega$.

We evaluate the PC-ZJB \eqref{PC} and BDF$k$-ZJB \eqref{BDFk} schemes using the Cauchy-type convergence test described in \cite{Garcke-Jiang2024}. The error and convergence order are defined as follows:
$$E_{M}(T,\tau_{1},\tau_{2})=\text{M}(\Gamma_{h_{1}}^{T/\tau_{1}},\Gamma_{h_{2}}^{T/\tau_{2}}),
~~\text{Order}=\log\left(\frac{E_{M}(T,\tau_{1},\tau_{2})}{E_{M}(T,\tau_{2},\tau_{3})}\right)\Big/\log\left(\frac{\tau_{1}}{\tau_{2}}\right),$$
where $\tau_{1},h_{1}$ and $\tau_{2},h_{2}$ satisfy the given path.

\begin{figure}[h!]
\hspace{-7mm}
\centering
\includegraphics[width=5.8in,height=3.5in]{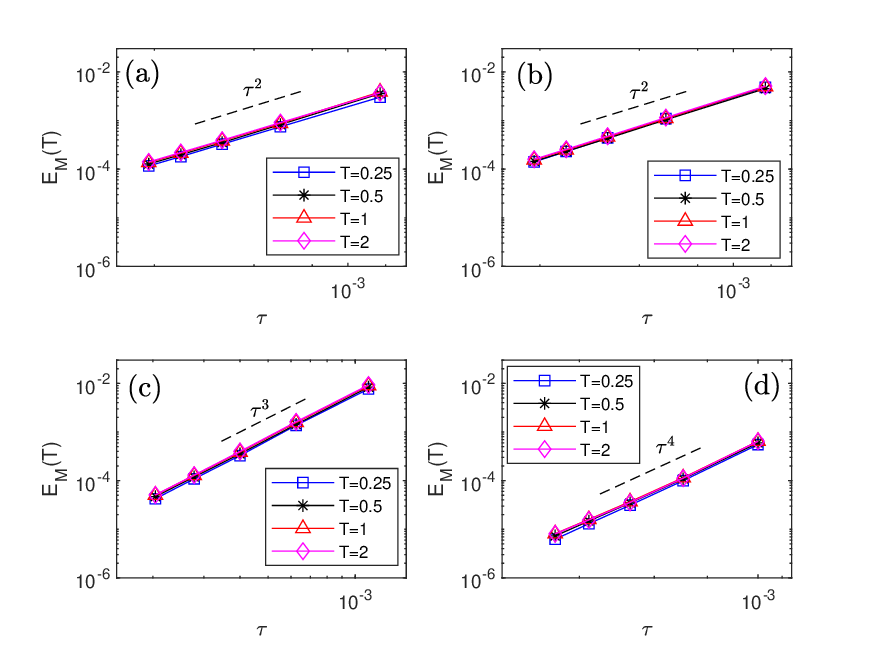}
\vspace{-5mm}
\caption{Log-log plots of the numerical errors $E_{M}$ at four different times (i.e., $T = 0.25, 0.5, 1, 2$) are presented for: (a) the PC-ZJB scheme \eqref{PC}, where the Cauchy path chosen as 
 $\tau=0.05h$; (b) the  BDF2-ZJB scheme\eqref{BDFk}, with $\tau=0.05h$; (c) the  BDF3-ZJB scheme\eqref{BDFk} , with $\tau=0.01h^{\frac{2}{3}}$; and (d) the  BDF4-ZJB scheme\eqref{BDFk},  with  $\tau=0.01h^{\frac{1}{2}}$. The Young's angle is chosen as $\theta_i=5\pi/6$.}
\label{fig:loglog1}
\end{figure}

Figure \ref{fig:loglog1} showcases the numerical errors $E_{M}$ versus the time step $\tau$ for the PC-ZJB scheme \eqref{PC} and the BDF$k$-ZJB scheme \eqref{BDFk} at four different times. As anticipated, robust convergence in terms of the manifold distance is observed: quadratic for the PC-ZJB  scheme and $k$-th order for the BDF$k$-ZJB scheme.

We now present a convergence analysis, using a Cauchy-type test, that compares the numerical equilibrium shapes from the PC-ZJB \eqref{PC} and BDF$k$-ZJB \eqref{BDFk} schemes against the theoretical Wulff shape \cite{wulff1901}. For isotropic surface energy in solid-state dewetting, the Wulff shape is a circular arc intersecting the substrate at the Young's angle $\theta_i$ \cite{Bao2017,Peng1998}. Its radius is fixed by area conservation, and the shape has the analytical expression:
\begin{equation}\label{eq:Wulff}
\begin{cases}
x(u) = -r\sin(\theta_i(1-2u)), \\
y(u) = -r\cos(\theta_i)+r\cos(\theta_i(1-2u)),
\end{cases}
\quad u \in [0, 1],
\end{equation}
where $r=\sqrt{\frac{|\Omega_0|}{\theta_i-\sin(\theta_i)\cos(\theta_i)}}$. Here, $|\Omega_0|$ denotes the area enclosed by the initial curve $\Gamma(0)$ and the substrate, and $\theta_i$ is the Young's angle.

Denoting the Wulff shape \eqref{eq:Wulff} as $\Gamma^{\text{Wulff}}$ and the numerical equilibrium shape as $\Gamma^e$, the error and convergence order are defined as
$$
E_M^{\text{Wulff}}(\tau, h) = \text{M}\left( \Gamma^{\text{Wulff}}, \Gamma^e(\tau, h) \right), \quad \text{Order} = \log\left( \frac{E_M^{\text{Wulff}}(\tau_1, h_1)}{E_M^{\text{Wulff}}(\tau_2, h_2)} \right)\Big/\log\left( \frac{\tau_1}{\tau_2} \right).
$$
The equilibrium state is considered to be reached when $\frac{W^{m}-W^{m+1}}{\tau}\leq\epsilon$ \cite{Zhao2021}, where $\epsilon$ is a small tolerance parameter set to $\epsilon=10^{-12}$ in our simulations.

\begin{figure}[h!]
\hspace{-7mm}
\centering
\includegraphics[width=4.5in,height=1.8in]{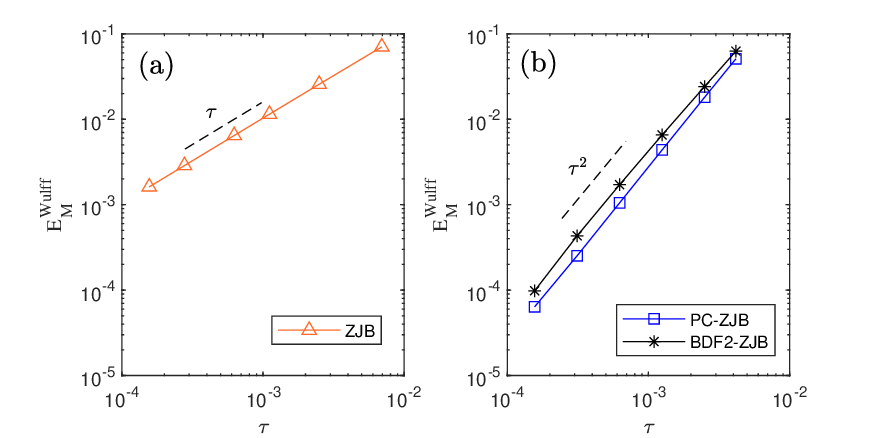}
\vspace{-3mm}
\caption{Log-log plots of the numerical errors $E_{M}^{\mathrm{Wulff}}$ are presented for: (a) the ZJB scheme \eqref{eqn:dis2d}, where the Cauchy path is chosen as $\tau=h^2$; (b) the PC-ZJB scheme \eqref{PC} and the BDF2-ZJB scheme \eqref{BDFk}, with $\tau=0.05h$. The Young's angle is chosen as $\theta_i=5\pi/6$.}
\label{fig:loglog-wulff}
\end{figure}

\begin{table}[h!]
    \centering
    \caption{A convergence order test is conducted for the PC-ZJB scheme \eqref{PC} and the BDF$k$-ZJB schemes ($k=2,3,4$) \eqref{BDFk} in the equilibrium state. The Cauchy  path is chosen as $\tau = 0.05h$ for the PC-ZJB and BDF2-ZJB schemes, $\tau = 0.025h^{2/3}$ for the BDF3-ZJB scheme, and $\tau = 0.0125h^{1/2}$ for the BDF4-ZJB scheme.}
\begin{adjustbox}{max width=\textwidth}
    \begin{tabular}{cccccccccccc}
        \toprule
        \multicolumn{3}{c}{\textbf{PC-ZJB}}&\multicolumn{3}{c}{\textbf{BDF2-ZJB}} & \multicolumn{3}{c}{\textbf{BDF3-ZJB}}& \multicolumn{3}{c}{\textbf{BDF4-ZJB}} \\
        \cmidrule(lr){1-3} \cmidrule(lr){4-6} \cmidrule(lr){7-9}  \cmidrule(lr){10-12}  
          $\tau$ & $E_M^{\mathrm{Wulff}}$  & Order &  $\tau$ & $E_M^{\mathrm{Wulff}}$  & Order & $\tau$ & $E_M^{\mathrm{Wulff}}$ & Order   & $\tau$ & $E_M^{\mathrm{Wulff}}$ & Order\\
        \midrule
        1/400  & 1.81E-02 & ---     &1/400  & 2.42E-02 & ---                & 1/360  & 1.04E-02 & ---            &1/320    &2.93E-02 &--- \\
        1/800 & 4.37E-03 & 2.0484    &1/800 & 6.55E-03 & 1.8842             & 1/640  & 1.83E-03 & 3.0209   &1/640  &1.63E-03 &4.1694\\
        1/1600 & 1.05E-03 & 2.0580    & 1/1600 & 1.71E-03 & 1.9341       & 1/1000 & 4.73E-04 & 3.0370 &1/800  &6.09E-04&4.4027\\
        1/3200 & 2.52E-04 & 2.0586    & 1/3200 & 4.32E-04 & 1.9872         & 1/1960 & 5.78E-05 & 3.1243  &1/1600&3.02E-05&4.3339\\
        1/6400 & 6.37E-05 & 1.9840     &  1/6400 & 9.81E-05 & 2.1391        & 1/3240 & 1.31E-05 & 2.9571  &1/2000 &1.25E-05&3.9711\\
        \bottomrule
    \end{tabular}
\end{adjustbox}
\label{tab1}
\end{table}

Figure \ref{fig:loglog-wulff} presents the convergence behavior of the ZJB scheme and our proposed second-order temporal schemes at equilibrium. As anticipated, the second-order schemes, namely the PC-ZJB scheme \eqref{PC} and the BDF2-ZJB scheme \eqref{BDFk}, exhibit a quadratic convergence rate. In contrast, the ZJB scheme \eqref{eqn:dis2d} exhibits only first-order convergence with respect to the time step $\tau$. Table \ref{tab1} summarizes the convergence order tests for the PC-ZJB scheme \eqref{PC} and the BDF$k$-ZJB schemes ($k=2,3,4$) \eqref{BDFk} at equilibrium. The results confirm that the PC-ZJB and BDF2-ZJB schemes achieve second-order accuracy, while the BDF3-ZJB and BDF4-ZJB schemes achieve third-order and fourth-order accuracy, respectively.

\subsection{Time evolution of some geometric quantities}
In this subsection, we present numerical results regarding the temporal evolution of the total free energy, area, contact angles, and mesh ratio indicator using the proposed high-order temporal schemes. First, the normalized energy $W(t)/W(0)$, the relative area loss $\Delta A(t)/A(0)$, the left scaled contact angle $\theta_{d}^{l}(t)/\pi$ (only the left contact angle is considered due to symmetry), and the mesh ratio indicator $\Psi(t)$ defined in \eqref{116} are introduced to examine the trends of energy, area, contact angle, and mesh quality, respectively. These quantities are defined as follows:
\begin{align*}
W(t)/W(0)\mid_{t=t_{n}}:=W^{n}/W^{0}&,\quad\Delta A(t)/A(0)\mid_{t=t_{n}}:=\frac{A^{n}-A^{0}}{A^0},\\
\theta_{d}^{l}(t)/\pi\mid_{t=t_{n}}:&=\arccos(\partial_{s}x^{n}\mid_{s=0})/\pi,\\
\Psi(t)\mid_{t=t_{n}}=\Psi^{n}:&=\frac{\max_{1\leq i\leq N}|\bm{h}_{i}^{n}|}{\min_{1\leq i\leq N}|\bm{h}_{i}^{n}|}.
\end{align*}

We show the time evolution of the normalized energy $W(t)/W(0)$ and the relative area loss $\Delta A(t)/A(0)$ for the PC-ZJB scheme \eqref{PC} in Figure \ref{fig:energy_area_pc_new}(a1)--(a3) and (b1)--(b3), respectively. As shown in Figure \ref{fig:energy_area_pc_new}(a1)--(a3), the normalized energy $W(t)/W(0)$ clearly decreases throughout the evolution, regardless of the mesh size, time step, or Young's angle. Similarly, Figure \ref{fig:energy_area_pc_new} (b1)--(b3) indicates that the area loss occurs primarily at the beginning of the evolution and stabilizes for 
$t\gg1$, again independent of the mesh size, time step, or Young's angle. This steady-state area loss can be significantly reduced by refining the time step 
$\tau$ (see Figure \ref{fig:energy_area_pc_new}(b2)).

\begin{figure}[h!]
\hspace{-7mm}
\centering
\includegraphics[width=6.5in,height=3.3in]{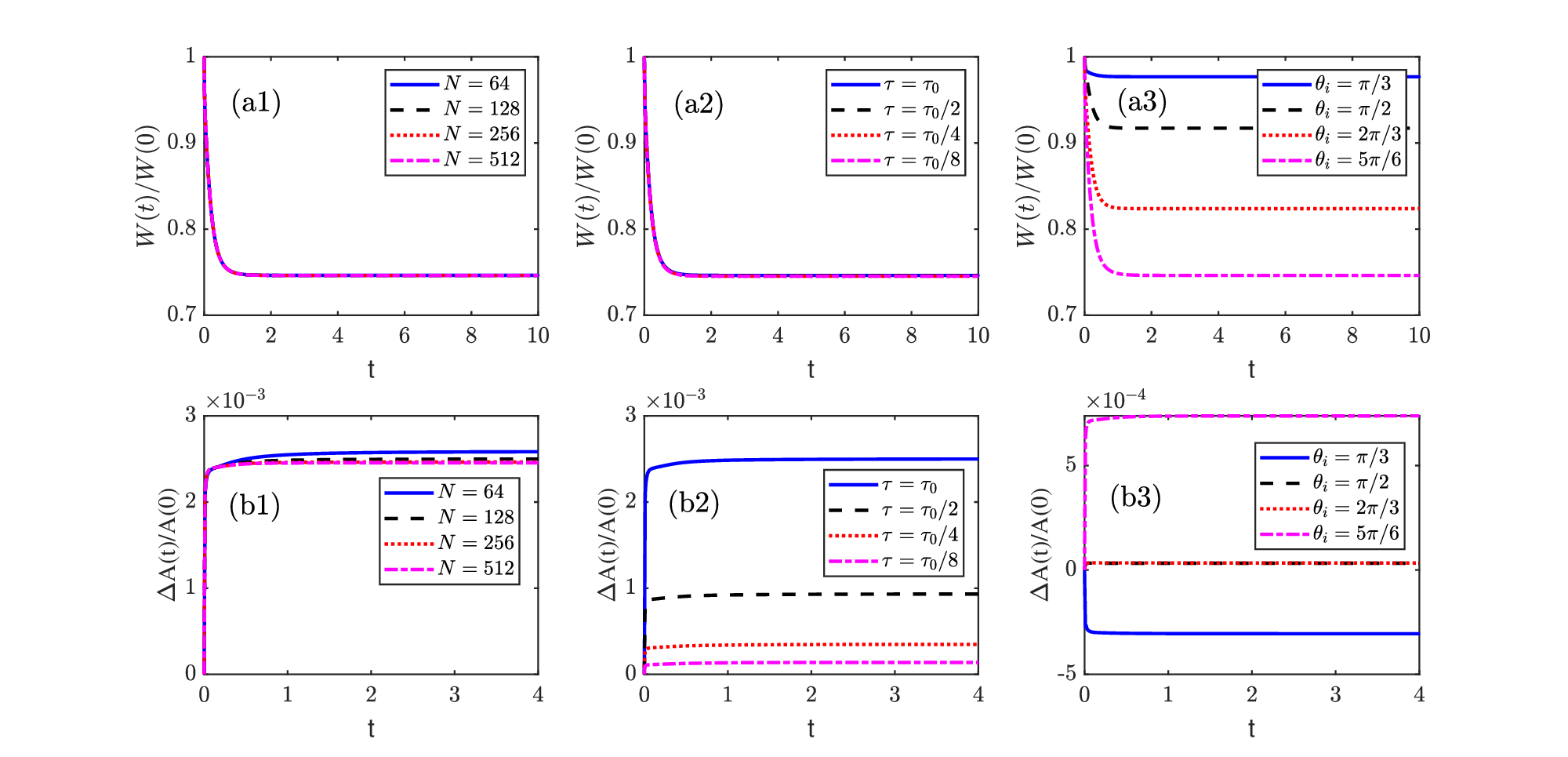}
\vspace{-3mm}
\caption{ Time evolution of the normalized energy $W(t)/W(0)$ and the relative area loss $\Delta A(t)/A(0)$ for the PC-ZJB scheme \eqref{PC}:
(a1, b1) under different mesh sizes $h=1/N$ with $\tau=0.01$ and $\theta_i=5\pi/6$;
(a2, b2) under different time steps $\tau$ with $h=1/128$, $\tau_0=0.01$, and $\theta_i=5\pi/6$;
(a3, b3) under different Young's angles $\theta_i$ with $h=1/128$ and $\tau=0.01$.
}
\label{fig:energy_area_pc_new}
\end{figure}

\begin{figure}[h!]
\hspace{-7mm}
\centering
\includegraphics[width=4.5in,height=1.8in]{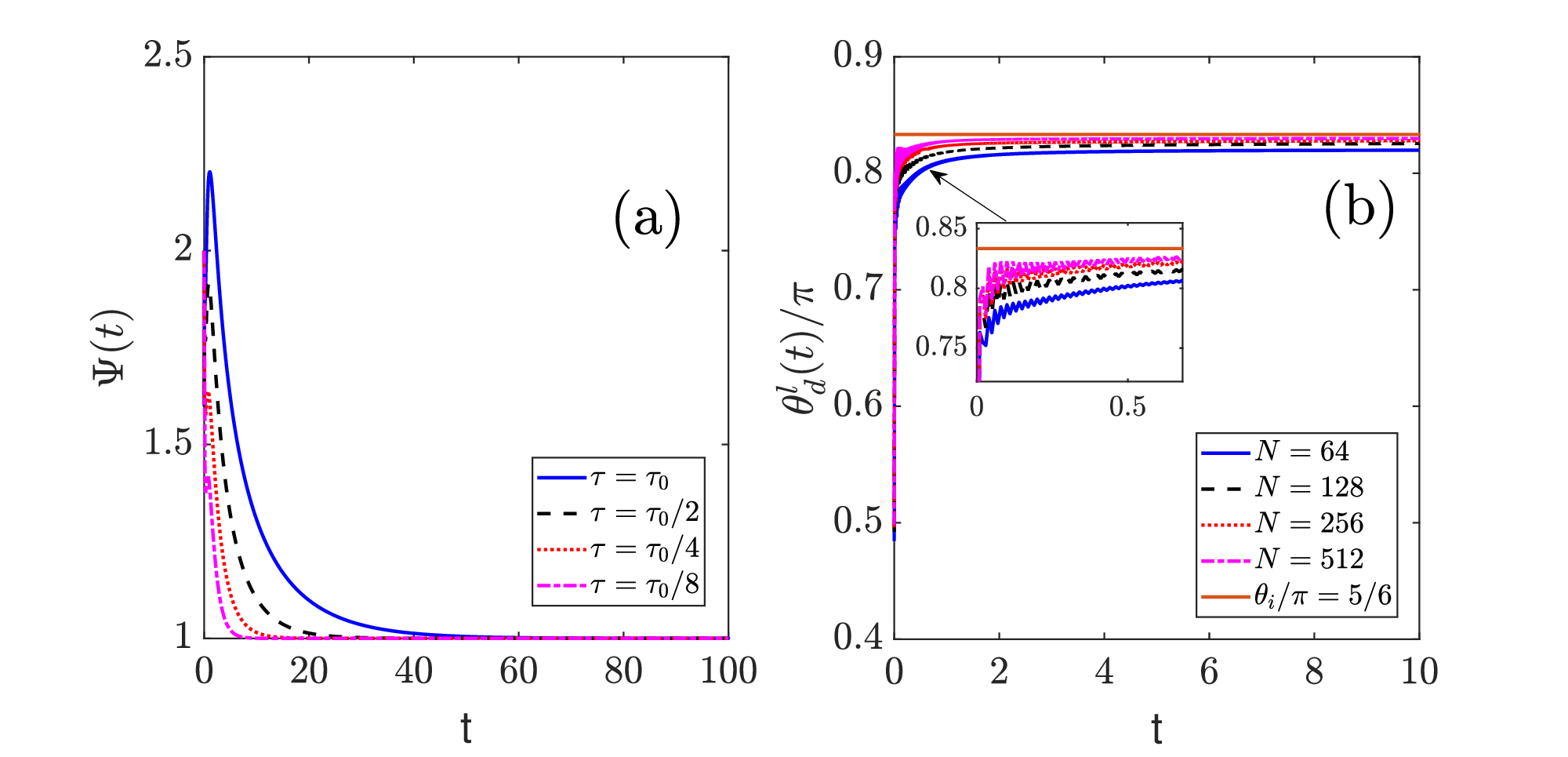}
\vspace{-3mm}
\caption{(a) Evolution of the mesh ratio indicator $\Psi(t)$ for the PC-ZJB scheme \eqref{PC} under four different time steps, with $\tau_0 = 0.01$, $N = 128$, and Young's angle $\theta_i = 5\pi/6$;
(b) evolution of the left contact angle $\theta_{d}^{l}(t)$ under four different mesh sizes, with $\tau = 0.01$ and Young's angle $\theta_i = 5\pi/6$.}
\label{fig:mesh_theta_pc}
\end{figure}

Figure \ref{fig:mesh_theta_pc} shows the temporal evolution of the  mesh ratio indicator $\Psi(t)$ under  four different time step sizes and  the left  contact angle $\theta_{d}^{l}(t)$  converging to the Young's angle  under four different mesh sizes . As clearly shown in Figure \ref{fig:mesh_theta_pc}(a), we observe that $\Psi(t)$ eventually converges to $1$, indicating asymptotic mesh equal distribution, in accordance with Proposition  \ref{prop1}, Morever, the $\Psi(t)$  can be quickly converges to $1$  when we refine the time step size $\tau$. Form Figure \ref{fig:mesh_theta_pc}(b),  we also observe the numerical convergence between the dynamic contact angle and Young's angle $\theta_i$ (red line) in the long time when we refine the mesh size  $h=1/N$ from $N = 64$ to $N = 512$.

\begin{figure}[h!]
\hspace{-7mm}
\centering
\includegraphics[width=6.5in,height=3.3in]{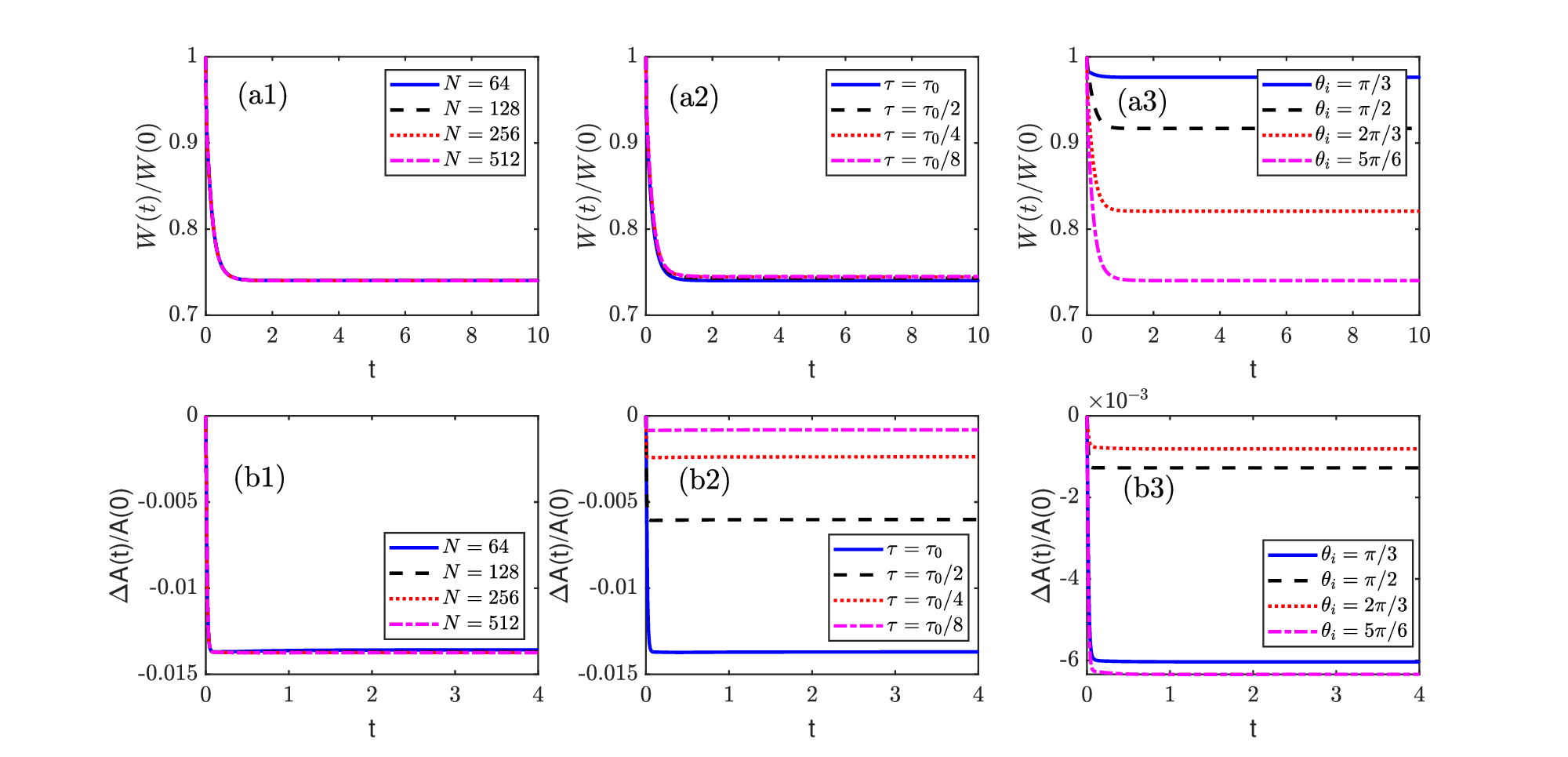}
\vspace{-3mm}
\caption{ Time evolution of the normalized energy $W(t)/W(0)$ and the relative area loss $\Delta A(t)/A(0)$ for the BDF2-ZJB scheme ($k=2$) \eqref{BDFk}:
(a1, b1) under different mesh sizes $h=1/N$ with $\tau=0.01$ and $\theta_i=5\pi/6$;
(a2, b2) under different time steps $\tau$  with $h=1/128$, $\tau_0=0.01$, and $\theta_i=5\pi/6$;
(a3, b3) under different Young's angles $\theta_i$ with $h=1/128$ and $\tau=0.01$.}
\label{fig:energy_area_2_new}
\end{figure}

\begin{figure}[h!]
\hspace{-7mm}
\centering
\includegraphics[width=4.5in,height=1.8in]{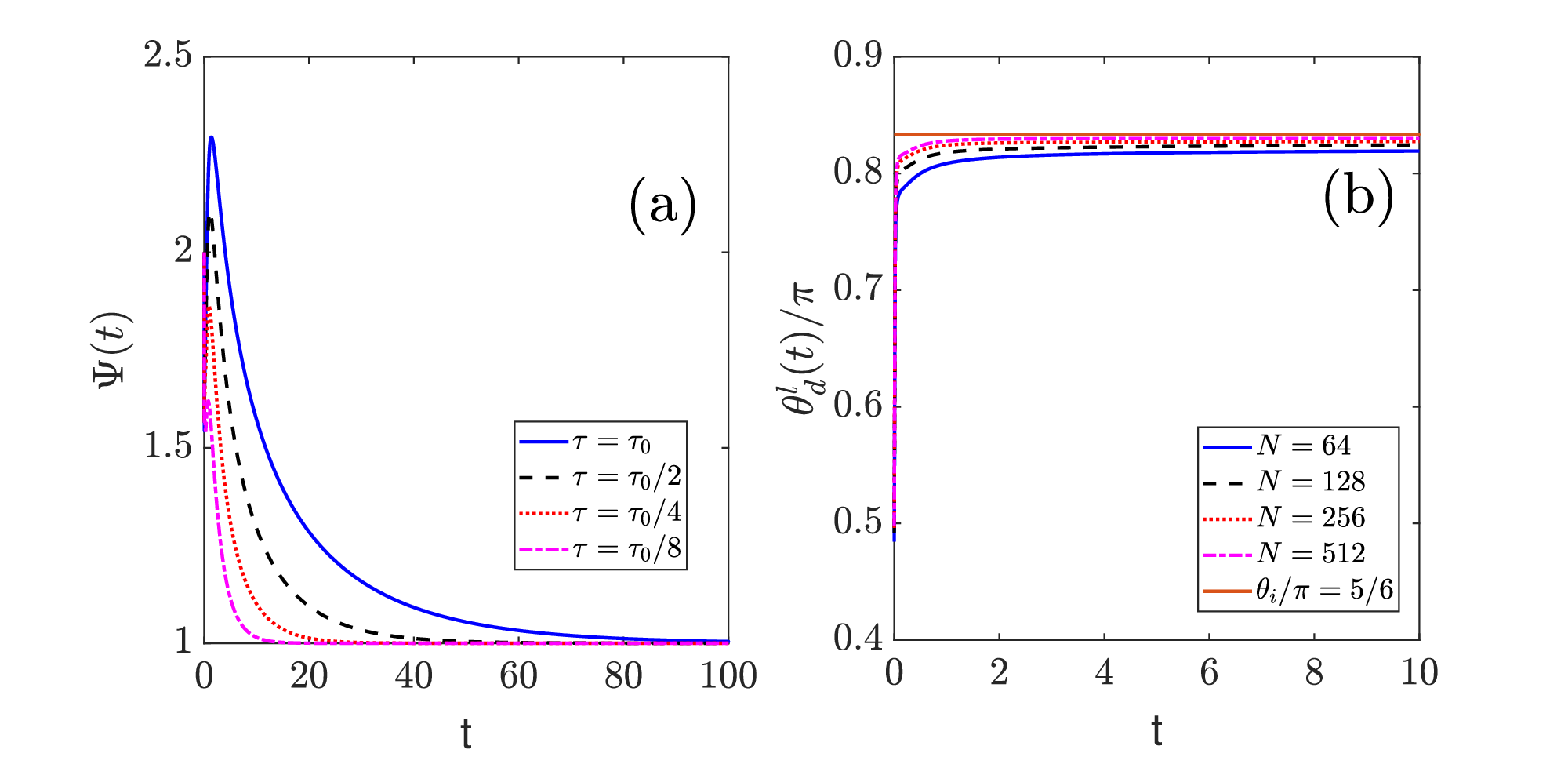}
\vspace{-3mm}
\caption{ (a) Evolution of the mesh ratio indicator $\Psi(t)$ for the BDF2-ZJB scheme ($k=2$) \eqref{BDFk} under four different time steps, with $\tau_0 = 0.01$, $N = 128$, and Young's angle $\theta_i = 5\pi/6$;
(b) evolution of the left contact angle $\theta_{d}^{l}(t)$ under four different mesh sizes, with $\tau = 0.01$ and Young's angle $\theta_i = 5\pi/6$.}
\label{fig:mesh_theta_2}
\end{figure}

\begin{figure}[h!]
\hspace{-7mm}
\centering
\includegraphics[width=6.5in,height=3.3in]{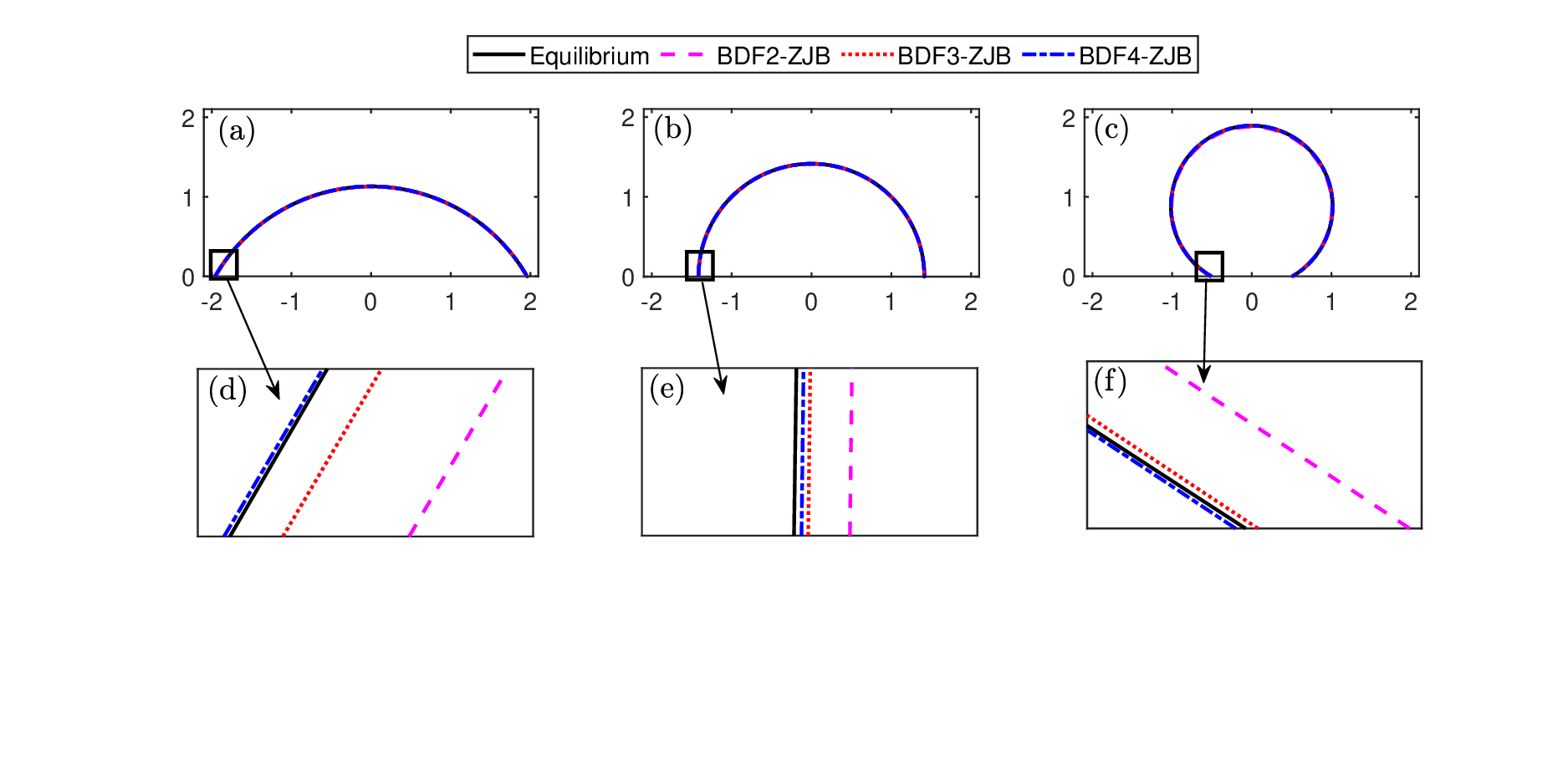}
\vspace{-25mm}
\caption{ Numerical equilibrium states obtained from the BDF$k$-ZJB scheme \eqref{BDFk} for three Young's angles: $\theta_i = \pi/3$ (a), $\theta_i = \pi/2$ (b), and $\theta_i = 5\pi/6$ (c). Corresponding zoomed-in views are shown in panels (d)--(f). Other parameters are set as $N = 128$ and $\tau = 0.01$. The solid black curves represent the theoretical equilibrium shapes constructed using \eqref{eq:Wulff}.}
\label{fig:equilibrium}
\end{figure}

We also performed numerical simulations using the BDF$k$-ZJB scheme \eqref{BDFk} with $k=2$. The results for normalized energy and relative area loss, presented in Figure \ref{fig:energy_area_2_new}, lead to the same conclusions as those from the PC-ZJB scheme \eqref{PC}. Furthermore, Figure \ref{fig:mesh_theta_2}(a) demonstrates that the scheme achieves an equal distribution of mesh points along the curve in the long time limit, consistent with Proposition \ref{prop2}. Figure \ref{fig:mesh_theta_2}(b) further shows the numerical convergence of the dynamic contact angle to Young's angle $\theta_i$ as the mesh is refined ($h=1/N$).

Numerical results of key geometric quantities from the BDF$k$-ZJB scheme \eqref{BDFk} with $k=3$ and $k=4$ support the same conclusions as those from the BDF2-ZJB scheme. Figure \ref{fig:equilibrium} compares the equilibrium states obtained using the BDF$k$-ZJB scheme with $k=2,3,4$ for three Young's angles. For these simulations, we use the same mesh size and time step size as in the previous tests ($N = 128$, $\tau = 0.01$). The results confirm that the BDF4-ZJB scheme more effectively captures the correct equilibrium contact angle than the BDF2-ZJB and BDF3-ZJB schemes.

\subsection{Evolution of complex shapes}
In this subsection, we apply the proposed high-order temporal schemes, the PC-ZJB scheme \eqref{PC} and the BDF$k$-ZJB scheme \eqref{BDFk}, to a more complex initial shape, defined as follows (see also \cite{Bao2021,Zhao2021}):
\begin{equation*}
\text{Curve I:}\left\{
\begin{aligned}
x&=(2+\cos(6\theta))\cos(\theta),~\\
y&=(2+\cos(6\theta))\sin(\theta),~
\end{aligned}
\quad\theta\in[0, \pi].
\right.
\end{equation*}

\begin{figure}[h!]
\centering
\hspace{-7mm}
\includegraphics[width=6.5in,height=2.3in]{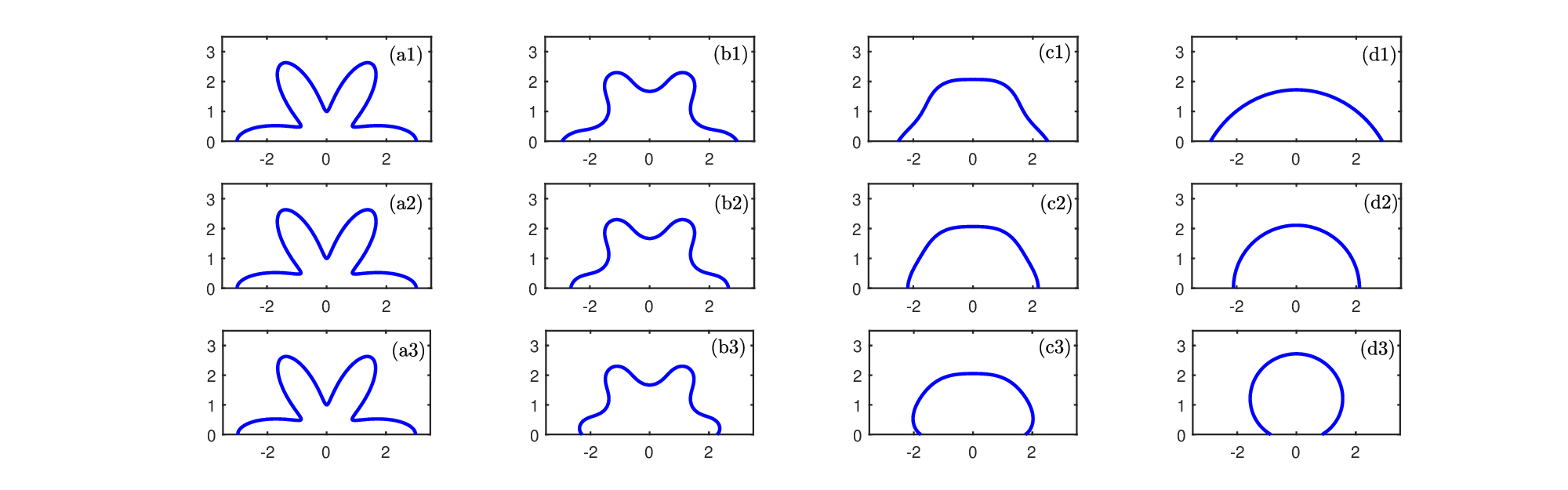}
\vspace{-3mm}
\caption{Morphological evolution of Curve I (simulated by the PC-ZJB scheme \eqref{PC}) toward equilibrium: $t = 0$; $t = 0.05$; $t = 0.1$; and $t = 5$. Results are shown for Young's angles of $\theta_i = \pi/3$ (top row), $\theta_i=\pi/2$ (middle row), and $\theta_i = 5\pi/6$ (bottom row). Other parameters: $N = 500$, $\tau = 10^{-3}$.}
\label{fig:shape-flower1}
\end{figure}

\begin{figure}[h!]
    \centering
     \begin{minipage}[b]{\textwidth}
        \includegraphics[width=\textwidth,height=0.15\textwidth]{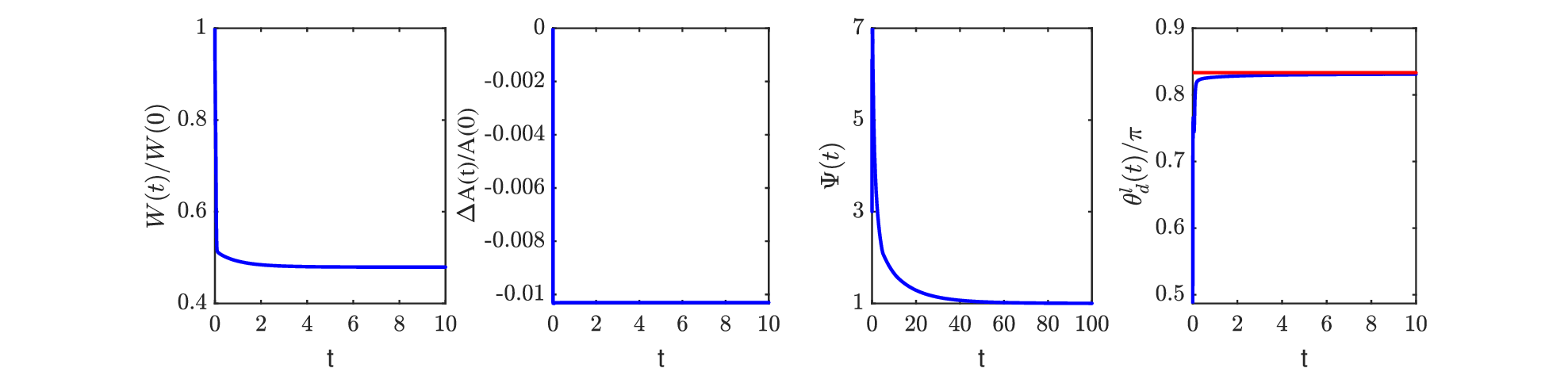}
       \centerline{(a) PC-ZJB scheme. }
    \end{minipage}
    \begin{minipage}[b]{\textwidth}
        \includegraphics[width=\textwidth,height=0.15\textwidth]{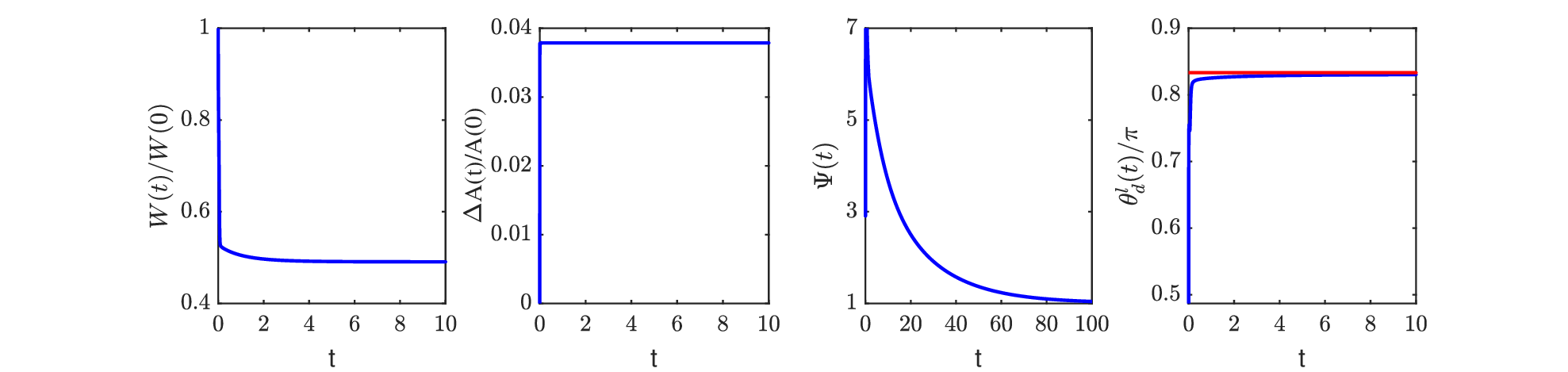}
       \centerline{(b) BDF2-ZJB scheme. }
    \end{minipage}

    \begin{minipage}[b]{\textwidth}
        \includegraphics[width=\textwidth,height=0.15\textwidth]{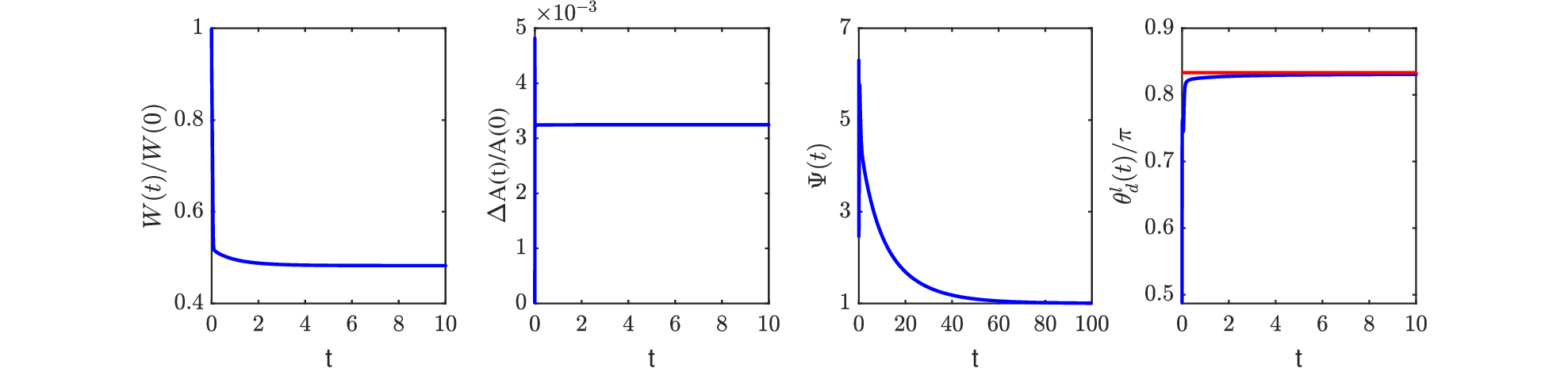}
       \centerline{(c) BDF3-ZJB scheme.}
    \end{minipage}

    \begin{minipage}[b]{\textwidth}
        \includegraphics[width=\textwidth,height=0.15\textwidth]{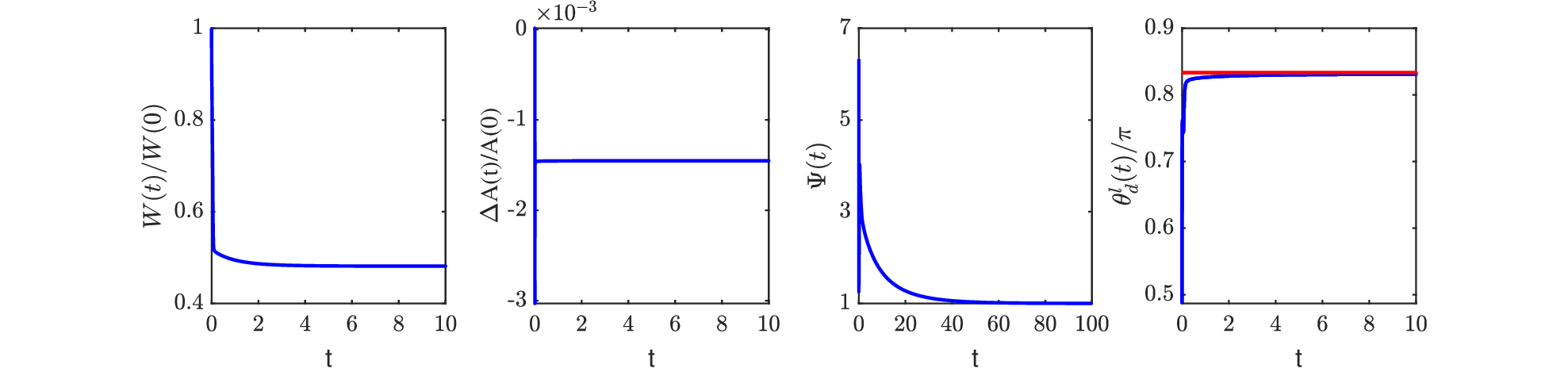}
       \centerline{(d) BDF4-ZJB scheme.}
    \end{minipage}
   \caption{Evolution of geometric quantities using high-order temporal schemes for initial Curve I. The parameters are chosen as: $N = 500$, $\tau = 10^{-3}$ and $\theta_i = 5\pi/6$.}
\label{fig:BDF-flower}
\end{figure}

Figure \ref{fig:shape-flower1} shows the morphological evolution, computed by the PC-ZJB scheme \eqref{PC}, of an island film with the initial shape Curve I toward its equilibrium state during solid-state dewetting for different Young's angles $\theta_i$. It can be observed that the petals gradually disappear, eventually forming a circular arc that contacts the substrate at angle $\theta_i$. The evolution results obtained from the BDF$k$-ZJB scheme \eqref{BDFk} for Curve I show good agreement with those presented here.

Figure \ref{fig:BDF-flower} shows the time evolution of several numerical quantities for the PC-ZJB and BDF$k$-ZJB schemes. We observe that the energy decreases and area loss occurs primarily at the beginning, along with long-term mesh equidistribution. Moreover, the last column shows that the dynamical contact angle converges to Young's angle, with the blue line representing the scaled left dynamical contact angle $\theta_{d}^l(t)/\pi$ and the red line the scaled Young's angle $\theta_{i}/\pi$.

%Figure \ref{fig:shape-flower1} depicts the curve evolution for Curve I. The numerical results are consistent with theoretical predictions via the Wulff construction (red line). Figure \ref{fig:GEO-flower1} shows the temporal evolution of key quantities:
%(i) relative area loss maintains a magnitude of $10^{-3}$;
%(ii) energy dissipation rate is preserved; and
%(iii) mesh ratio indicator approaches a constant during long-time evolution.
%Furthermore, Figure \ref{fig:GEO-flower1}(d) demonstrates that the Lagrange multiplier approaches zero, indicating our formulation's consistency with the original Li-Bao framework.

%Figure \ref{fig:BDF-flower} illustrates the evolution of geometric quantities for the PEDRL-PC scheme \eqref{PC} and the PEDRL-BDF$k$ ($k=2,3,4$) scheme \eqref{BDFk}. The key findings are:
%(i) relative area loss maintains a magnitude of $10^{-3}$, with Newton iterations per time step stabilizing at 2–5 from $t=6\tau$;
%(ii) all schemes preserve the energy dissipation law, consistent with Theorems \ref{thm-PC} and \ref{thm-BDFk};
%(iii) mesh ratios asymptotically approach constant values during long-time evolution; and
%(iv) Lagrange multipliers approaches zero.

%

%===============================================================conclusions========================================================
%==================================================================================================================================
\section{Conclusions}

We have proposed a class of temporal high-order (second-order to fourth-order) parametric finite element schemes for simulating solid-state dewetting in two dimensions, based on the ZJB formulation \cite{Zhao2021}. These schemess utilize a predictor-corrector strategy and backward differentiation formulae for time discretization. Furthermore, we prove that the proposed schemes are well-posed and satisfy the long-term mesh equidistribution property. Numerical results demonstrate that our schemes achieve high-order temporal convergence rates in manifold distance, in contrast to the first-order convergence of the ZJB scheme with backward Euler discretization. The development of an unconditionally energy-stable, high-order temporal scheme for long-time simulations of solid-state dewetting with isotropic surface energy remains an open challenge, which will be a primary focus of our future work.

\section*{Acknowledgement}
The authors sincerely appreciate Professor Wei Jiang for his valuable comments and suggestions.
This work was partially supported by the National Natural Science Foundation of China Grant No. 12271414.
The numerical calculations in this paper have been done on the supercomputing system in the Supercomputing Center of Wuhan University.

\section*{Data availability}
No data was used for the research described in the article.

%===========================================================Acknowledgement=========================================================
%===================================================================================================================================
%\section*{Acknowledgement}
%Wei Jiang and Lian Zhang are supported by the National Natural Science Foundation of China (No. 12271414) and the Guangdong Basic and Applied Basic Research Foundation (No. 2024A1515012505). Chunmei Su and Ganghui Zhang are supported by National Key R\&D Program of China (No. 2023YFA1008902) and  National Natural Science Foundation of China (No. 12201342).
%The numerical calculations in this paper have been partially done on the Center of High Performance Computing, Tsinghua University and the supercomputing system in the Supercomputing Center of Wuhan University.

\bigskip

%\section*{References}
%%% Bibliography  %%%%%%%%%%

\end{document}